\newtheorem{theorem}{Theorem}[section]
\newtheorem{lemma}[theorem]{Lemma}
\newtheorem{proposition}[theorem]{Proposition}
\theoremstyle{definition}
\newtheorem{definition}[theorem]{Definition}
\theoremstyle{remark}
\newtheorem{remark}[theorem]{Remark}
\numberwithin{equation}{section}
\newcommand{\R}{\mathbb{R}}
\newcommand{\Z}{\mathbb{Z}}
\newcommand{\N}{\mathbb{N}}
\newcommand{\f}{\varphi}
\newcommand{\ep}{\varepsilon}
\newcommand{\B}{\mathcal B}
\newcommand{\F}{\mathcal F}
\newcommand{\Pp}{\mathbb P}
\newcommand{\Rr}{\mathcal R}
\newcommand{\Om}{\Omega}
\newcommand{\Ss}{\mathcal S}
\begin{document}

\title[Haagerup property and infinite measures]{A new characterization of the Haagerup property by  actions on infinite measure spaces}

\author{Thiebout Delabie\ $^\dag$, Paul Jolissaint\ $^\ddag$, Alexandre Zumbrunnen\ $^\ddag$}
\address{Universit\'e Paris-Sud,
Facult\'e des Sciences d'Orsay,
D\'epartement de Math\'ematiques,
B\^atiment 307,
F-91405 Orsay Cedex}
\email{thiebout.delabie@gmail.com}
\address{Universit\'e de Neuch\^atel,
       Institut de Math\'ematiques,       
       E.-Argand 11,
       2000 Neuch\^atel, Switzerland}
       
\email{paul.jolissaint@unine.ch,}
\email{alexandre.zumbrunnen@unine.ch}

\subjclass[2010]{Primary 22D10, 22D40; Secondary 28D05}

\date{\today}

\keywords{Locally compact groups, unitary representations, Haagerup property\\
$^\dag$Universit\'e Paris-Sud, T. D. is supported by grant P2NEP2 181564 of the Swiss National Science Foudation, \ $^\ddag$Universit\'e de Neuch\^atel}

\begin{abstract}
The aim of the article is to provide a characterization of the Haagerup property for locally compact, second countable groups in terms of actions on $\sigma$-finite measure spaces. It is inspired by the very first definition of amenability, namely the existence of an invariant mean on the algebra of essentially bounded, measurable functions on the group.
\end{abstract}

\maketitle

\section{Introduction}

Throughout this article, $G$ denotes a locally compact, second countable group (lcsc group for short); we assume furthermore that it is non-compact, because the compact case is not relevant to the Haagerup property \cite{ccjjv} which is the central theme of these notes. 

The latter property is often interpreted as a weak form of amenability or as
a strong negation of Kazhdan's property (T), depending on the context.

A way to see that it is a weak form of amenability is to consider the following characterization: recall that $G$ has the Haagerup property if and only if there exists a sequence of normalized, positive definite functions $(\f_n)_{n\geq 1}$ on $G$ such that $\f_n$ converges to the constant function $1$ uniformly on compact subsets of $G$ as $n\to\infty$, and each $\f_n\in C_0(G)$, \textit{i.e.} $\f_n\to 0$ at infinity. In turn, $G$ is amenable if and only if each $\f_n$ can be chosen with compact support.
\par\vspace{2mm}

Nowadays, there are several characterizations of the Haagerup property: apart from the ones presented in the monography \cite{ccjjv}, we can mention for instance the ones involving actions on median spaces or on measured walls as in  \cite{chatterjietall}. 
In fact, the characterization presented here has no direct relationship with the latter ones; it rests rather on strongly mixing actions on probability spaces: see Theorem \ref{thm2.2.2}.

\par\vspace{2mm}
The present article has its origin in the characterization of amenability given by its original definition: the lcsc group $G$ is amenable if and only if the algebra $L^\infty (G)$ has a $G$-invariant mean with respect to the action of $G$ on itself by left translation. 

Observe that such an action is a special case of proper actions that we recall now.
Let $\Om$ be a locally compact space. Suppose that the lcsc group $G$ acts continuously on $\Om$. Then the action is \textit{proper} if for all compact subsets $K,L\subset\Om$, the set
\[
\{g\in G : gK\cap L\not=\emptyset\}
\]
is relatively compact in $G$. Then a question arises: Which lcsc groups admit proper actions on locally compact spaces $\Om$ equipped with an invariant measure and such that $L^\infty(\Om)$ has an invariant mean? Here is the answer:

\begin{proposition}
Let $G$ be a lcsc group and let $\Omega$ be a locally compact space on which $G$ acts properly and which admits a $G$-invariant, regular Borel measure $\mu$. If $L^\infty(\Omega)$ has an invariant mean, then $G$ is amenable.
\end{proposition}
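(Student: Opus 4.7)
The plan is to transport the invariant mean $m$ on $L^\infty(\Om)$ to $L^\infty(G)$ by precomposing with a suitable $G$-equivariant, positive, unital linear map $\Phi\colon L^\infty(G)\to L^\infty(\Om)$. Once such a $\Phi$ is constructed, $M:=m\circ\Phi$ immediately defines an invariant mean on $L^\infty(G)$: positivity and the normalization $M(1)=1$ follow from $\Phi$ being positive and unital, while left-invariance of $M$ follows from the $G$-equivariance of $\Phi$ combined with that of $m$. This matches the original definition of amenability.

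To build $\Phi$ I would use a Bruhat-type averaging function. First pick a nonnegative $\rho\in C_c(\Om)$ whose support meets every $G$-orbit; such $\rho$ can be obtained as a countable sum of bump functions covering $\Om$, using its second countability. Properness ensures that for each $\omega$ the set $\{g\in G : g^{-1}\omega\in\mathrm{supp}\,\rho\}$ is relatively compact, hence
\[
P(\omega):=\int_G\rho(g^{-1}\omega)\,dg
\]
is everywhere finite. A change of variable $g\mapsto g_0g$ in the left Haar integral shows that $P$ is $G$-invariant, and $P(\omega)>0$ whenever the orbit of $\omega$ meets the open set $\{\rho>0\}$, hence $\mu$-almost everywhere by our choice of $\rho$. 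Setting $\tilde\rho:=\rho/P$ (extended by $0$ where $P=0$), one has $\int_G\tilde\rho(g^{-1}\omega)\,dg=1$ for $\mu$-a.e.\ $\omega$. Then define
\[
\Phi(h)(\omega):=\int_G h(g)\,\tilde\rho(g^{-1}\omega)\,dg,\qquad h\in L^\infty(G).
\]
This is bounded by $\|h\|_\infty$ (because the total mass of $\tilde\rho$ along each orbit is $1$), positive, unital, and $G$-equivariant via the same Haar change of variable together with the $G$-invariance of $P$.

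The only real obstacle is choosing $\rho$ so that $P$ is $\mu$-almost everywhere strictly positive, which requires a bit of care in covering $\Om$ by bump functions and in verifying that $\Phi(h)$ is genuinely a well-defined element of $L^\infty(\Om,\mu)$; all remaining verifications are routine Fubini and change-of-variable computations. Then $M=m\circ\Phi$ is the desired invariant mean on $L^\infty(G)$, and $G$ is amenable.
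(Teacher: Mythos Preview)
Your approach is correct and genuinely different from the paper's. The paper argues via almost invariant vectors: an invariant mean on $L^\infty(\Om)$ yields, for each compact $L\subset G$ and $\ep>0$, a unit vector $\xi\in C_c(\Om)\subset L^2(\Om)$ with $\sup_{g\in L}|\langle\pi_\Om(g)\xi\mid\xi\rangle-1|<\ep$; properness of the action then forces the coefficient $g\mapsto\langle\pi_\Om(g)\xi\mid\xi\rangle$ to have compact support, so the constant $1$ is approximated uniformly on compacta by compactly supported positive definite functions, which is one of the standard characterizations of amenability. You instead transfer the mean directly to $L^\infty(G)$ via a Bruhat-type averaging map $\Phi$. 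Your route stays closer to the original definition of amenability and avoids the detour through positive definite functions; the paper's route, on the other hand, is tailored to the theme of the article (compact support of coefficients as the amenable endpoint, versus the $C_0$-behaviour that characterizes the Haagerup property).

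One technical slip worth flagging: you cannot have $\rho\in C_c(\Om)$ \emph{and} have $\mathrm{supp}\,\rho$ meet every $G$-orbit unless $\Om/G$ is compact, and conversely if $\rho$ is a countable sum of bump functions with $\{\rho>0\}=\Om$ then $\{g\in G:g^{-1}\omega\in\mathrm{supp}\,\rho\}=G$ is certainly not relatively compact, so your stated reason for the finiteness of $P(\omega)$ collapses. The repair is standard: for a proper action of a lcsc group on a lcsc space there always exists a continuous $\tilde\rho\ge 0$ with $\int_G\tilde\rho(g^{-1}\omega)\,dg=1$ for every $\omega$ (a Bruhat, or cutoff, function; see Bourbaki, \emph{Int\'egration}, Chap.~VII, \S2, or construct it by choosing the weights in your countable sum so that each $P_n$ is controlled on an exhausting sequence of compacta). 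Once you invoke that, your map $\Phi$ is well defined and the remainder of your argument goes through unchanged.
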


Indeed, it is a well-known fact that the existence of an invariant mean on $L^\infty(\Omega)$ is equivalent to the fact that, for every compact set $L\subset G$ and every $\ep>0$, there exists a continuous function $\xi$ on $\Om$ with compact support, such that $\Vert\xi\Vert_2=1$ and
\[
\sup_{g\in L}|\langle\pi_\Om(g)\xi|\xi\rangle-1|<\ep
\]
where $\pi_\Om$ denotes the natural unitary representation of $G$ on $L^2(\Om)$ associated to the action of $G$ on $\Om$.
As the latter is proper and $\xi$ has compact support, the coefficient function $\f=\langle\pi_\Om(\cdot)\xi|\xi\rangle$ has compact support too, and thus the constant function $1$ is a uniform limit on compact sets of compactly supported positive definite functions, which means that $G$ is amenable.

\par\vspace{2mm}
Thus, $G$ is amenable if and only if it admits a proper, measure-preserving action on some locally compact space $\Om$ so that $L^\infty(\Om)$ admits a $G$-invariant mean.

\par\vspace{2mm}
As properness of actions is too strong to characterize the Haagerup property, we consider the setting of measure-preserving actions on measure spaces equipped with invariant measures. It turns out that the following property is well adapted to our situation.

\begin{definition}\label{C0dynam}
Let $(\Om,\B,\mu)$ be a measure space on which a lcsc group $G$ acts by Borel automorphisms which preserve $\mu$. Then we say that the corresponding dynamical system $(\Om,\B,\mu,G)$ is a $C_0$-\textit{dynamical system} if, for all $A,B\in \B$ such that $0\le\mu(A),\mu(B)<\infty$, one has
\[
\lim_{g\to\infty}\mu(gA\cap B)=0.
\]
\end{definition}

\begin{remark}\label{nonergodic}
Let $(\Omega,\B,\mu,G)$ be a $C_0$-dynamical system. Then every $G$-invariant set $A\in\B$ such that $\mu(A)<\infty$ is automatically of measure zero. In particular, if $\mu$ is finite then it is equal to zero.
Moreover, the action of $G$ is not ergodic in general. Indeed, let $\Z$ act by translations on $\R$ equipped with Lebesgue measure $\mu$. Then 
\[
A:=\{x+k : x\in (0,1/2),\ k\in\Z\}=\bigsqcup_{k\in\Z} (k,k+1/2)
\]
is $\Z$-invariant and $\mu(A)=\mu(A^c)=\infty$. The action is $C_0$ since it is proper, as it is the restriction to $\Z$ of the action of $\R$ on itself.
\end{remark}

\begin{remark}
Let $(\Om,\B,\mu,G)$ be a (measure-preserving) dynamical system; then it is a $C_0$-dynamical system if and only if the permutation representation $\pi_\Om$ of $G$ on $L^2(\Om)$ is a $C_0$-representation. Hence, we infer that if $G$ admits a $C_0$-dynamical system $(\Om,\B,\mu,G)$ such that $L^\infty(\Om)$ has a $G$-invariant mean, then $G$ has the Haagerup property.
\end{remark}

Then the goal of the present article is to prove the converse.

\begin{theorem}\label{thmC0}
Let $G$ be a lcsc group which has the Haagerup property. Then there exists a $C_0$-dynamical system $(\Om,\B,\mu,G)$ such that $L^\infty(\Om)$ has a $G$-invariant mean. More precisely, 
\begin{enumerate}
\item [(1)] the measure $\mu$ is $\sigma$-finite, $G$-invariant and the Hilbert space $L^2(\Om,\mu)$ is separable;
\item [(2)] for all measurable sets $A,B\in \B$ such that $0\leq\mu(A),\mu(B)<\infty$, we have
\[
\lim_{g\to\infty}\mu(gA\cap B)=0;
\]
in other words, $\pi_\Om$ is a $C_0$-representation;
\item [(3)] there exists a sequence of unit vectors $(\xi_n)\subset L^2(\Om,\mu)$ such that $\xi_n\ge 0$ for every $n$, and for every compact set $K\subset G$, one has
\[
\lim_{n\to\infty}\sup_{g\in K}\,\langle\pi_\Om(g)\xi_n|\xi_n\rangle=1;
\]
in other words, the dynamical system $(\Om,\B,\mu,G)$ has an invariant mean.
\end{enumerate}
\end{theorem}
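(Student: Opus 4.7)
Since $G$ has the Haagerup property, by the standard characterization (see \cite{ccjjv}) it admits an orthogonal representation $\pi: G \to O(H_\R)$ on a real separable Hilbert space together with a $1$-cocycle $u: G \to H_\R$ (so $u(gh) = \pi(g) u(h) + u(g)$) whose norm is proper: $\|u(g)\| \to \infty$ as $g \to \infty$. My plan is to combine the Gaussian measure-preserving model of $\pi$ with the amenable space $\R$ and to twist the product using $u$: the resulting random shift in the $\R$-direction will have variance growing with $g$, which will yield the $C_0$ property, while the amenable Lebesgue factor $\R$ will supply the almost invariant vectors.

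Let $(X, \gamma)$ be the Gaussian probability space associated to $(H_\R, \pi)$: a standard probability space equipped with a linear isometry $B: H_\R \to L^2(X, \gamma, \R)$ onto centred Gaussian variables and a measure-preserving action $\pi_X$ satisfying $\pi_X(g) B(w) = B(\pi(g) w)$. Set $v(g) := u(g^{-1})$; the cocycle identity for $u$ translates directly into $v(gh) = v(h) + \pi(h^{-1}) v(g)$, which is precisely what is needed so that
\[
g \cdot (x, t) := \bigl(\pi_X(g) x,\ t + B(v(g))(x)\bigr)
\]
defines a measurable $G$-action on $\Om := X \times \R$. Since $\pi_X(g)$ preserves $\gamma$ and Lebesgue measure $\lambda$ is translation-invariant, $\mu := \gamma \times \lambda$ is $G$-invariant, $\sigma$-finite, and of infinite total mass; and $L^2(\Om, \mu) = L^2(X, \gamma) \otimes L^2(\R, \lambda)$ is separable, establishing (1).

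Take $\xi_N := 1_X \otimes \chi_{[-N, N]}/\sqrt{2N}$: each $\xi_N \geq 0$ is a unit vector, and
\[
\langle \pi_\Om(g) \xi_N, \xi_N\rangle = \int_X \Bigl(1 - \frac{|B(v(g^{-1}))(x)|}{2N}\Bigr)_+ d\gamma(x).
\]
For $g$ in any compact $K \subset G$, $\|v(g^{-1})\| = \|u(g)\|$ is bounded, and $\int_X |B(v(g^{-1}))(x)|\,d\gamma(x) = \sqrt{2/\pi}\,\|u(g)\|$, so this integral converges to $1$ uniformly in $g \in K$ as $N \to \infty$, giving (3). For (2), a similar unwinding on cylinder sets $A = E \times I$, $B = F \times J$ of finite measure yields
\[
\mu(g A \cap B) \leq \min(|I|, |J|) \cdot \frac{|I| + |J|}{\sqrt{2 \pi}\,\|u(g)\|},
\]
by the Gaussian density bound; this tends to $0$ as $\|u(g)\| \to \infty$, and totality of cylinder sets in $L^2(\Om, \mu)$ upgrades this to $\mu(g A \cap B) \to 0$ for arbitrary finite-measure $A, B$.

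The crux of the argument is the choice of twist. Three conditions compete: measure-preservation under $\gamma \times \lambda$ forces the cocycle identity satisfied by $v$, which dictates $v(g) = u(g^{-1})$; the $C_0$ condition then requires the Gaussian shift to have variance $\|v(g)\|^2 = \|u(g)\|^2$ diverging, which is precisely the properness assumption equivalent to Haagerup; while the almost invariance comes essentially for free from the amenable Lebesgue factor $\R$. Once this twisted Gaussian-times-Lebesgue picture is in place, the remaining work (verifying the cocycle and measure-preservation, the Gaussian estimates, and the density argument extending from cylinder sets to arbitrary finite-measure sets) is routine.
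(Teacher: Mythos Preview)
Your proof is correct and takes a genuinely different route from the paper's. The paper starts from the characterization of the Haagerup property via a strongly mixing probability-preserving action with an asymptotically invariant sequence (Theorem~\ref{thm2.2.2}), builds the infinite product $X=\prod_n S$ with a bespoke product-type measure $\mu$, and then must repair two defects of that construction: $\mu$ is not $\sigma$-finite and the Koopman representation on $L^2(X,\sigma(\F),\mu)$ is not continuous (Remark~\ref{nonContinue}). This forces the passage to the sub-$\sigma$-algebra $\sigma(\F_c)$ in Section~3 and a further restriction, via a countable dense subgroup, to a $\sigma$-finite invariant subset. Your construction instead uses the proper $1$-cocycle characterization and forms the skew product $X\times\R$ over the Gaussian action, with the $\R$-shift given by the Gaussian variable $B(u(g))$. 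This gives $\sigma$-finiteness, separability, and strong continuity of $\pi_\Om$ essentially for free; the $C_0$ property reduces to the Gaussian density bound $\sup_t p_\sigma(t)\le (2\pi)^{-1/2}\|u(g)\|^{-1}$, and the amenable factor $(\R,\lambda)$ supplies the F{\o}lner-type almost invariant vectors directly.

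Two small points worth flagging, though neither is a gap. First, you use $\pi_X(g)$ both for the point transformation of $X$ and for the Koopman operator on $L^2(X)$; the cocycle check you sketch is correct once one fixes the convention $B(w)\circ T_g = B(\pi(g)^{-1}w)$. Second, $(g,x)\mapsto B(v(g))(x)$ is a priori only defined modulo $\gamma$-null sets, so your formula gives a \emph{near-action}; one obtains a genuine Borel action either by realizing $X=\R^{\N}$ and defining $B$ via partial sums, or by invoking the usual point realization for continuous Koopman representations on separable $L^2$-spaces. These are exactly the ``routine'' details you allude to. Overall your argument is substantially shorter and more conceptual; what the paper's approach buys in exchange is that it stays entirely within the mixing-action framework and makes the link to Theorem~\ref{thm2.2.2} explicit.
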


\par\vspace{2mm}
The proof of Theorem \ref{thmC0} will occupy the rest of the article. It relies on the characterization of the Haagerup property stated in Theorem 2.2.2 of \cite{ccjjv} that we recall now.

\begin{theorem}\label{thm2.2.2} (\cite{ccjjv}, Theorem 2.2.2)
Let $G$ be a lcsc group. Then it has the Haagerup property if and only if there exists a standard probability space $(S,\B_S,\nu)$ on which $G$ acts by Borel automorphisms which preserve $\nu$, and $(S,\B_S,\nu)$ has the following additional two properties:
\begin{enumerate}
\item [(a)] the action of $G$ on $S$ is \textit{strongly mixing}, which means that for all $A,B\in \B_S$,
\[
\lim_{g\to \infty}\nu(gA\cap B)=\nu(A)\nu(B);
\]
\item [(b)] the action admits a non-trivial asymptotically invariant sequence: there exists a sequence $(A_n)_{n\geq 1}\subset \B_S$ such that $\nu(A_n)=1/2$ for every $n$ and such that, for every compact set $K\subset G$,
\[
\lim_{n\to\infty}\sup_{g\in K}\nu(gA_n\bigtriangleup A_n)=0.
\]
\end{enumerate}
\end{theorem}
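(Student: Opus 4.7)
The statement is an equivalence, so my plan is to treat the two implications separately.

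For the easy ``if'' direction, I would work with the Koopman representation $\pi_S$ of $G$ on $L^2_0(S) := L^2(S,\nu)\ominus\C\mathbf{1}$. Hypothesis (a) forces $\pi_S$ to be a $C_0$-representation: strong mixing on indicators extends by an $\ep/3$-argument and density to all of $L^2_0(S)$. From (b) I would form the mean-zero unit vectors $\xi_n := 2(\mathbf{1}_{A_n}-\tfrac12)\in L^2_0(S)$, note the identity $\|\pi_S(g)\xi_n-\xi_n\|^2 = 4\nu(gA_n\bigtriangleup A_n)$, and extract the normalized positive definite functions $\f_n(g) := \langle\pi_S(g)\xi_n|\xi_n\rangle$. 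Each $\f_n$ lies in $C_0(G)$ by $C_0$-ness of $\pi_S$, and $\f_n\to 1$ uniformly on compact sets, which is exactly the definition of the Haagerup property recalled in the introduction.

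For the converse, the plan is to invoke the Gaussian functor. First I would translate the Haagerup property into the existence of a $C_0$ orthogonal representation $(\pi,\Hh_\R)$ on a separable real Hilbert space together with a sequence of unit vectors $(v_n)\subset\Hh_\R$ that is almost $\pi$-invariant uniformly on compact subsets of $G$; this is standard, going from a Haagerup approximation by $C_0$ normalized positive definite functions to the associated GNS representations, summing them, and passing to the underlying real Hilbert space. Next I would associate to $\Hh_\R$ its Gaussian probability space $(S,\B_S,\nu)$ and the induced measure-preserving $G$-action, whose Koopman representation decomposes along the symmetric Fock space $\bigoplus_{k\geq 0}\Hh_\R^{\odot k}\otimes\C$ on which $G$ acts as $\bigoplus_k\pi^{\odot k}$.

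The verification of (a) then reduces to the standard fact that $C_0$-ness is preserved by tensor products and countable direct sums of unitary representations once the trivial summand is removed. For (b), given an almost invariant $v_n$, I would let $X_n$ denote the associated standard Gaussian random variable on $S$ and set $A_n := \{X_n>0\}$, so that $\nu(A_n)=1/2$ automatically. The classical bivariate Gaussian identity
\[
\nu(gA_n\bigtriangleup A_n) = \tfrac{1}{\pi}\arccos\langle\pi(g)v_n,v_n\rangle
\]
then transfers the almost invariance of $v_n$ uniformly on compact sets into the required asymptotic invariance of $(A_n)$. The main obstacle I anticipate is the careful bookkeeping surrounding the Gaussian functor: checking that $(S,\B_S,\nu)$ is a genuine standard Borel probability space (which uses separability of $\Hh_\R$), verifying that the $C_0$ property propagates through all symmetric tensor powers and countable direct sums on the mean-zero subspace, and justifying the explicit Gaussian calculation yielding the arccosine formula for the symmetric difference.
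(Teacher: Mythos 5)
Your outline is correct, but note that the paper offers no proof of this statement: it is imported verbatim as Theorem 2.2.2 of \cite{ccjjv}, so there is no in-paper argument to compare against. What you propose is essentially the standard proof from that reference: the easy direction via the Koopman representation restricted to $L^2_0(S)$ (strong mixing giving a $C_0$-representation, the asymptotically invariant sets giving almost invariant unit vectors and hence $C_0$ normalized positive definite functions tending to $1$ uniformly on compacta), and the converse via the Gaussian functor applied to a $C_0$ orthogonal representation with almost invariant vectors, with the half-space sets $A_n=\{X_n>0\}$ and the arccosine formula $\nu(gA_n\bigtriangleup A_n)=\tfrac{1}{\pi}\arccos\langle\pi(g)v_n,v_n\rangle$ yielding condition (b). The steps you flag as remaining bookkeeping (standardness of the Gaussian space, propagation of the $C_0$ property through symmetric tensor powers and countable direct sums on the mean-zero subspace, and the bivariate Gaussian computation) are exactly the standard verifications and present no obstruction.
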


Furthermore, we assume that $S$ is a compact metric space on which $G$ acts continuously, and $\nu$ has support $S$, according to (the proof of) Lemma 1.3 of \cite{AEG} that we recall for the reader's convenience.

\begin{lemma}\label{Giordanoetall}
(\cite{AEG}, Lemma 1.3)
Let $X$ be a standard Borel $G$-space with a $G$-invariant probability measure $\mu$. Then there exists a compact metric space $Y$, on which $G$ acts continuously, and a $G$-invariant probability measure $\nu$, whose support is $Y$, such that $L^2(X,\mu)$ and $L^2(Y,\nu)$ are $G$-isomorphic.
\end{lemma}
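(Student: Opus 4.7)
The plan is to realize $L^2(X,\mu)$ as the $L^2$-space of a compact metric $G$-space by applying Gelfand duality to a carefully chosen separable $G$-invariant $C^*$-subalgebra $\mathcal{A}\subset L^\infty(X,\mu)$. The two properties I need from $\mathcal{A}$ are: (i) the $G$-action on $\mathcal{A}$ is continuous in the $L^\infty$-norm, so that the induced action on the spectrum $\mathrm{Spec}(\mathcal{A})$ is jointly continuous; and (ii) $\mathcal{A}$ is dense in $L^2(X,\mu)$, so that the GNS construction attached to the $G$-invariant state $f\mapsto\int_X f\,d\mu$ recovers all of $L^2(X,\mu)$.

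The key preparatory step is to produce enough $L^\infty$-norm-continuous vectors. For $f\in L^\infty(X,\mu)\cap L^2(X,\mu)$ and a continuous compactly supported probability density $\rho$ on $G$, set $\rho\ast f := \int_G \rho(g)(g\cdot f)\,dg$. Then $\|\rho\ast f\|_\infty\le \|\rho\|_1\|f\|_\infty$, and a direct computation yields
\[
\|g_0\cdot(\rho\ast f)-\rho\ast f\|_\infty \le \|g_0\cdot\rho-\rho\|_1\,\|f\|_\infty,
\]
which tends to $0$ as $g_0\to e$ by continuity of left translation on $L^1(G)$. Hence $\rho\ast f$ lies in the $G$-invariant $C^*$-subalgebra $L^\infty_c(X,\mu)\subset L^\infty(X,\mu)$ consisting of $L^\infty$-continuous vectors. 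For an approximate identity $(\rho_n)$ on $G$, $\rho_n\ast f\to f$ in $L^2(X,\mu)$ by strong continuity of the permutation representation on $L^2$, so $L^\infty_c(X,\mu)\cap L^2(X,\mu)$ is dense in $L^2(X,\mu)$.

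Now pick a countable $L^2$-dense family $\{f_n\}\subset L^\infty_c(X,\mu)\cap L^2(X,\mu)$, a countable dense subgroup $H\subset G$, and let $\mathcal{A}$ be the unital $C^*$-subalgebra of $L^\infty(X,\mu)$ generated by $\{h\cdot f_n : h\in H,\ n\ge 1\}$. It is separable and contains $\{f_n\}$ (taking $h=e$), hence is $L^2$-dense. Since each orbit $G\cdot f_n$ is $L^\infty$-continuous and $H$ is dense in $G$, for every $g\in G$ and $h\in H$ the element $(gh)\cdot f_n$ is an $L^\infty$-limit of elements of $H\cdot f_n\subset \mathcal{A}$, so $\mathcal{A}$ is $G$-invariant; the same continuity, extended to polynomials in the generators and then to $\|\cdot\|_\infty$-limits (uniformly in $g$), shows the $G$-action on $\mathcal{A}$ is $L^\infty$-continuous. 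Applying Gelfand duality gives $\mathcal{A}\simeq C(Y_0)$ for a compact metric space $Y_0$ with continuous $G$-action, and the state $f\mapsto\int_X f\,d\mu$ corresponds to a $G$-invariant Borel probability $\nu_0$ on $Y_0$. Setting $Y:=\mathrm{supp}(\nu_0)$ and $\nu:=\nu_0|_Y$ yields a compact metric $G$-space with $\mathrm{supp}(\nu)=Y$. Finally, the Gelfand map is an isometry $(\mathcal{A},\|\cdot\|_{L^2(\mu)})\to (C(Y_0),\|\cdot\|_{L^2(\nu_0)})$ by definition of $\nu_0$ as the push-forward of $\mu$, and since $\mathcal{A}$ is $L^2$-dense it extends to a $G$-equivariant unitary $L^2(Y,\nu)=L^2(Y_0,\nu_0)\cong L^2(X,\mu)$.

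The main obstacle is securing joint continuity of the $G$-action on the compact spectrum $Y_0$: this forces the $G$-action on $\mathcal{A}$ to be continuous in the $L^\infty$-norm, not merely strongly continuous on $L^2$, and it must coexist with $G$-invariance and $L^2$-density. Producing enough $L^\infty$-continuous vectors is exactly the role of the convolution smoothing step; once the right $C^*$-algebra is in hand, Gelfand duality and passage to the support of $\nu_0$ are routine.
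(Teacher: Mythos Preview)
The paper does not prove this lemma; it merely quotes it from \cite{AEG} (Lemma~1.3) and uses it as a black box. So there is no ``paper's own proof'' to compare against.

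Your argument is correct and is essentially the standard Gelfand-duality construction (and, as far as I can tell, the one in \cite{AEG}). The logic is sound at each stage: convolution by $\rho\in C_c(G)$ produces $L^\infty$-norm-continuous vectors because $g_0\cdot(\rho\ast f)=(g_0\cdot\rho)\ast f$ and left translation on $L^1(G)$ is continuous; these vectors are $L^2$-dense via an approximate identity and strong continuity of $\pi_X$ on $L^2$ (which holds automatically for a Borel measure-preserving action on a standard probability space, cf.\ the remark opening Section~3 of the paper); the $C^*$-subalgebra generated by countably many $H$-translates of such vectors is separable, $G$-invariant, and sits inside the $L^\infty$-continuous vectors (a norm-closed $*$-subalgebra); and norm-continuity of the $G$-action on $C(Y_0)$ is exactly what yields joint continuity of the action on $Y_0$. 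The GNS/Riesz step and restriction to the support are routine.

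Two minor points worth tightening in a final write-up: (i) the pointwise definition of $\rho\ast f$ requires a word about why the integral is well-defined for $\mu$-a.e.\ $x$ and measurable in $x$ (Fubini on $G\times X$ plus invariance of null sets under the action); (ii) calling $\nu_0$ the ``push-forward of $\mu$'' is slightly loose---it is the Riesz measure representing the state $f\mapsto\int_X f\,d\mu$ on $C(Y_0)$, and you have not constructed a map $X\to Y_0$. Neither affects the validity of the argument.
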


\par\vspace{2mm}
We end this introduction with a brief sketch of the proof of Theorem \ref{thmC0}. It uses elementary measure theory (see \cite{Bil} on this subject) but it is quite long and involved. 

We start with the infinite product space $X=\prod_{n\geq 1} S$, where $S$ satisfies all properties of Theorem \ref{thm2.2.2} and Lemma \ref{Giordanoetall}. We equip $X$ with the diagonal action of $G$ and with a suitable family $\F$ of subsets containing all sets of the form $B=\prod_n B_n$ such that the infinite product $\prod_n 2\nu(B_n)$ converges, where $B_n\in\B_S$ for every $n$. We construct a measure $\mu$ on $\sigma(\F)$, the $\sigma$-algebra generated by $\F$, that satisfies $\mu(B)=\prod_n 2\nu(B_n)$ for every $B$ as above.

One of the reasons of the choices of such sets and measure $\mu$ is that one can extract from the family $(A_n)$ in condition (b) of Theorem \ref{thm2.2.2} a sequence $(X_m)_m\subset \F$ so that the associated sequence of unit vectors $\xi_m=\chi_{X_m}$ satisfies condition (3) of Theorem \ref{thmC0}. They are unit vectors since $\nu(A_n)=1/2$ for every $n$.

In order to prove that condition (2) holds, consider two subsets $A=\prod_nA_n$ and $B=\prod_n B_n$ such that the infinite products $\prod_n2\nu(A_n)$ and $\prod_n 2\nu(B_n)$ converge. Given $\ep>0$, choose first $N$ large enough so that 
\[
\frac{1}{2}-\ep<\nu(A_n),\nu(B_n)<\frac{1}{2}+\ep
\]
for every $n\ge N$. Then, 
using the strong mixing property of the action of $G$ on $S$ (condition (a) in Theorem \ref{thm2.2.2}), we choose a suitable positive number $\ep'>0$, an integer $m>0$, and a compact set $K\subset G$ such that, for every $g\notin K$,  $\nu(gA_n\cap B_n)\le \nu(A_n)\nu(B_n)+\ep'$ for all $N\le n\le N+m$. Hence we get  
\begin{align*}
\mu(gA\cap B)
&\leq \prod_{n=1}^{N-1}2\nu(A_n)\cdot\prod_{n>N+m}2\nu(A_n)
\cdot
\prod_{n=N}^{N+m} 2(\nu(A_n)\nu(B_n)+\ep')\\
&=
\mu(A)\prod_{n=N}^{N+m}\frac{2\nu(A_n)\nu(B_n)+2\ep'}{2\nu(A_n)}<\ep
\end{align*} 
since the quotients $\frac{2\nu(A_n)\nu(B_n)+2\ep'}{2\nu(A_n)}$ belong to some interval $(0,\delta)$ for a convenient value of $\delta<1$ such that $\delta^{m+1}<\ep/\mu(A)$.
\par\vspace{2mm}

It turns out that the $\sigma$-algebra $\sigma(\F)$ generated by $\F$ is too large, so that the dynamical system $(X,\sigma(\F),\mu,G)$ is not $\sigma$-finite, and, as observed by A. Calderi and A. Valette (cf. Remark \ref{nonContinue}), the associated representation $\pi_X$ on $L^2(X,\mu)$ is not continuous.

Thus, we need to divide the proof of Theorem \ref{thmC0} into two parts: in the first one, we prove that $(X,\sigma(\F),\mu,G)$ is a $C_0$-dynamical system as stated in Definition \ref{C0dynam} on the one hand, and we construct a sequence of unit vectors that satisfy condition (3) in Theorem \ref{thmC0} on the other hand. In the case where $G$ is discrete, it is very simple to restrict our dynamical system to a $\sigma$-finite one, so that the proof is complete with the former additional assumption.
All this is contained in Section 2. 

In the last part of the proof, which is the subject of Section 3, we define a sub-$\sigma$-algebra $\sigma(\F_c)$ of $\sigma(\F)$ and a measure $\mu_c$ so that, for every $A\in\sigma(\F_c)$, $\mu_c(A)<\infty$, we have $\lim_{g\to e}\mu_c(gA\bigtriangleup A)=0$. This implies the continuity of the permutation representation $\pi_X:G\rightarrow U(L^2(X,\sigma(\F_c),\mu_c))$, and finally the latter property is used to prove that we can restrict our dynamical system to get a $\sigma$-finite measure.

\par\vspace{2mm}
\textit{Acknowledgements.} We very grateful to A. Calderi and A. Valette for the content of Remark \ref{nonContinue}, to S. Baaj for his help on the automatic continuity of the permutation representations on separable $L^2$-spaces, and to the referee for his/her valuable comments.

\section{Proof of Theorem \ref{thmC0}, Part 1}

For the rest of the article, $G$ denotes a (non compact) lcsc group with the Haagerup property. According to Theorem \ref{thm2.2.2} and Lemma \ref{Giordanoetall}, let $(S,\B_S,\nu)$ be a compact metric space equipped with a probability measure $\nu$ whose support is $S$, and $G$ acts continously on $S$ and preserves $\nu$ and which satisfies conditions (a) and (b) of Theorem \ref{thm2.2.2}. 

Then put $X=\prod_{n\geq 1}S=\{(s_n)_{n\geq 1}: s_n\in S\ \forall\ n\}$. If $\mathcal{S}$ is any non-empty family of subsets of $X$, we denote by $\sigma(\mathcal S)$ the $\sigma$-algebra generated by $\mathcal S$.

\par\vspace{2mm}
Here is the starting point of our construction.

\begin{definition}\label{famillesF}
Let $X$ be as above. 
\begin{enumerate}
\item [(1)] We denote by $\F_0$ the family of subsets of $X$ 
 of the form $A=\prod_{n\geq 1}A_n$ where $A_n\in \B_S$ for all $n$ such that the infinite product $\prod_{n=1}^\infty 2\nu(A_n)$ exists, \textit{i.e.} the sequence of partial products $(\prod_{n=1}^N 2\nu(A_n))_{N\geq 1}$
converges to some limit in $[0,\infty)$\footnote{Recall that an infinite product $\prod_n u_n$ of complex numbers \textit{converges} if $\lim_{N\to\infty}\prod_{n=1}^N u_n$ exists \textit{and} is different from $0$, and it \textit{converges trivially} if the limit equals $0$. Thus we require that the infinite product $\prod_{n=1}^\infty 2\nu(A_n)$ either converges or converges trivially.}. We also set
\[
\F_{0,+}=\{B=\prod_nB_n\in\F_0 : \nu(B_n)>0\ \forall\ n\}.
\]
\item [(2)] We define the following sequence $(\F_n)_{n\geq 1}$ of collections of subsets  of $X$ by induction: for $n\geq 1$, set $\F_n=\{B\setminus A: A,B\in\F_{n-1}\}$. Finally, we set 
\[
\F:=\bigcup_{n\geq 0}\F_n.
\]
\end{enumerate}
\end{definition}

Observe that $\emptyset\in\F_0$, hence also that $\F_n\subset \F_{n+1}$ for every $n$.

\begin{lemma}\label{2.2}
For all $A,B\in\F_0$, one has $A\cap B\in\F_0$.
\end{lemma}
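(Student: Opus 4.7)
The goal is to show that if $A=\prod_n A_n$ and $B=\prod_n B_n$ both lie in $\F_0$, then $A\cap B=\prod_n(A_n\cap B_n)$ does too. Set $u_n=2\nu(A_n)$, $v_n=2\nu(B_n)$, $w_n=2\nu(A_n\cap B_n)$, and let $U_N,V_N,P_N$ denote the respective partial products of length $N$. The task reduces to proving that $(P_N)$ has a limit in $[0,\infty)$, knowing by hypothesis that $U_N\to\alpha$ and $V_N\to\beta$ with $\alpha,\beta\in[0,\infty)$.

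The key estimate will be $w_n\leq\sqrt{u_nv_n}$, which follows from $\nu(A_n\cap B_n)\leq\min(\nu(A_n),\nu(B_n))\leq\sqrt{\nu(A_n)\nu(B_n)}$ (or, equivalently, from Cauchy--Schwarz applied to $\chi_{A_n}$ and $\chi_{B_n}$ in $L^2(S,\nu)$). First the degenerate situation is disposed of: if some $\nu(A_n)$ or $\nu(B_n)$ equals $0$, then $w_m=0$ for $m\geq n$, so $P_N=0$ eventually and $(P_N)$ tends to $0$ (trivial convergence). Assume henceforth that $u_n,v_n>0$ for all $n$, so that $U_N,V_N>0$ for every $N$, and introduce the normalized quantity
\[
R_N:=\frac{P_N}{\sqrt{U_NV_N}}.
\]
A direct computation gives $R_{N+1}/R_N=w_{N+1}/\sqrt{u_{N+1}v_{N+1}}\in[0,1]$ by the key estimate, so $(R_N)$ is a non-increasing sequence of nonnegative reals, hence converges to some $\gamma\in[0,1]$. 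Combining with $\sqrt{U_NV_N}\to\sqrt{\alpha\beta}$ then yields
\[
P_N=R_N\cdot\sqrt{U_NV_N}\longrightarrow\gamma\sqrt{\alpha\beta}\in[0,\infty),
\]
and hence $A\cap B\in\F_0$.

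The main obstacle is that $(P_N)$ itself need not be monotone: individual factors $w_n$ can be either less than or greater than $1$, so a direct monotone convergence argument does not apply. Normalizing by $\sqrt{U_NV_N}$, a sequence already known to converge by hypothesis, converts the problem into one about a genuinely monotone sequence, which seems to be the cleanest route.
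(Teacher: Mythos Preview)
Your proof is correct and follows essentially the same strategy as the paper's: normalize the partial products $P_N$ by a sequence already known to converge so that the quotient becomes monotone non-increasing in $[0,1]$. The only difference is cosmetic---the paper normalizes asymmetrically by $U_N$ alone (using merely $\nu(A_n\cap B_n)\le\nu(A_n)$, i.e.\ conditional probabilities), whereas you normalize symmetrically by $\sqrt{U_NV_N}$ via the geometric-mean bound; both routes lead to the same conclusion in the same way.
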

\begin{proof} Let $A,B\in\F_0$. Let us write $A=\prod_nA_n$ and $B=\prod_nB_n$ as above. Then $\nu(A_n\cap B_n)\leq \min(\nu(A_n),\nu(B_n))$ for every $n$. If there exists an integer $n$ such that $\min(\nu(A_n),\nu(B_n))=0$, then the product $\prod_{n=1}^\infty 2\nu(A_n\cap B_n)$
 converges trivially, and $A\cap B\in\F_0$. Suppose then that $\nu(A_n)>0$ for every $n$, and consider the product of conditional probabilities
\[
a_N=\prod_{n=1}^N\nu(B_n|A_n)=\prod_{n=1}^N\frac{\nu(A_n\cap B_n)}{\nu(A_n)}.
\]
As $0\leq \nu(B_n|A_n)\leq 1$ for every $n$, one has $0\leq a_{N+1}\leq a_N\leq 1$ for every $N$, and the bounded, decreasing sequence $(a_N)_{N\geq 1}$ converges, say, to $a\in [0,1]$. Then the sequence
\[
\prod_{n=1}^N2\nu(A_n\cap B_n)=a_N\cdot\prod_{n=1}^N2\nu(A_n)
\]
converges to $a\cdot\prod_{n=1}^\infty 2\nu(A_n)$.
\end{proof}

\begin{lemma}\label{2.3}
The family $\F$ is a semiring of subsets of $X$, i.e.
\begin{enumerate}
\item [(i)] if $A,B\in\F$ then $A\cap B\in \F$;
\item [(ii)] if $A,B\in\F$ then $B\setminus A\in\F$.
\end{enumerate}
In particular, the set of all finite, disjoint unions of elements of $\F$ is a ring of subsets of $X$. It is the ring generated by $\F$ and is denoted by $\Rr(\F)$.
Moreover,
\begin{enumerate}
\item [(iii)] for every $A\in\F$, there exists $B\in\F_0$ such that $A\subset B$.
\end{enumerate}
\end{lemma}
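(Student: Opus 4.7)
The proof splits naturally into three inductive arguments, preceded by a useful preliminary observation. The preliminary step is to note that $\emptyset\in\F_0$ (take all factors $A_n=\emptyset$ so that $\prod_n 2\nu(A_n)=0$, which converges trivially), and hence for every $k\geq 0$, every $C\in\F_k$ may be written as $C=C\setminus\emptyset$ with $C,\emptyset\in\F_k$; by the definition of $\F_{k+1}$ this gives $\F_k\subset\F_{k+1}$. In particular, for any finitely many elements of $\F$, one may find a common $\F_k$ containing all of them.

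With this in hand, property (ii) is immediate: given $A,B\in\F$, choose $k$ large enough that both $A,B\in\F_k$; then $B\setminus A\in\F_{k+1}\subset\F$ directly from the definition of $\F_{k+1}$.

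For property (i), I would proceed by induction on $n+m$, where $A\in\F_n$ and $B\in\F_m$. The base case $n=m=0$ is Lemma \ref{2.2}. For the inductive step, assume WLOG $n\geq 1$ and write $A=V\setminus U$ with $U,V\in\F_{n-1}$. A direct set-theoretic check gives $A\cap B=(V\cap B)\setminus(U\cap B)$; by the inductive hypothesis both $V\cap B$ and $U\cap B$ lie in $\F$, hence in a common $\F_k$, so their difference lies in $\F_{k+1}\subset\F$. Property (iii) is a similarly short induction on $n$: the case $n=0$ is trivial (take $B=A$), and for $A=V\setminus U\in\F_n$ with $V,U\in\F_{n-1}$, the inductive hypothesis gives some $B\in\F_0$ with $V\subset B$, whence $A\subset V\subset B$.

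Finally, the ``in particular'' clause is the standard elementary fact that whenever a family of sets is closed under intersection and under set difference, the collection of finite disjoint unions of its members is the generated ring; one checks closure under finite unions and differences using properties (i) and (ii) together with elementary Boolean manipulations. I do not expect any genuine obstacle here: the one point that could be overlooked is the nestedness $\F_k\subset\F_{k+1}$, which is what makes both the difference in (ii) and the iterated intersection argument in (i) land in a single $\F_{k+1}$ rather than merely in some union of such classes.
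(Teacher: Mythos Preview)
Your proof is correct and follows essentially the same approach as the paper. The only cosmetic differences are that the paper inducts on a single index $n$ (after using the nestedness $\F_k\subset\F_{k+1}$ to place both $A$ and $B$ in the same $\F_n$) and uses the identity $A\cap B=((A_1\cap B_1)\setminus A_2)\setminus B_2$, whereas you induct on $n+m$ and decompose only one factor via $A\cap B=(V\cap B)\setminus(U\cap B)$; both are equally valid and equally short.
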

\begin{proof} (i) We prove by induction on $n\geq 0$ that for all $A,B\in\F_n$, one has $A\cap B\in\F$. The claim is true for $n=0$ by Lemma \ref{2.2}. Thus let us assume that the claim is true for $n\geq 0$, and let $A,B\in\F_{n+1}$. Then there exist $A_1,A_2,B_1,B_2\in\F_n$ such that $A=A_1\setminus A_2$ and $B=B_1\setminus B_2$. Then by induction hypothesis, there exists $m\geq n$ such that $A_1\cap B_1\in\F_m$. As
\[
A\cap B=(A_1\cap A_2^c)\cap(B_1\cap B_2^c)=((A_1\cap B_1)\setminus A_2)\setminus B_2,
\]
this shows that $A\cap B\in\F_{m+2}\subset\F$ since $(A_1\cap B_1)\setminus A_2\in F_{m+1}$.\\
Assertion (ii) follows readily from the definitions, and
(iii) is established by induction on $n$.
\end{proof}

\par\vspace{2mm}

The next step consists in defining a suitable measure $\mu$ on the $\sigma$-algebra $\sigma(\F)=\sigma(\F_0)$ generated by $\F$ (or equivalently by $\F_0$).

\par\vspace{2mm}
In order to do that, we associate to every element $B=\prod_n B_n\in \F_{0,+}$ the probability measure $\Pp_B$ on the $\sigma$-algebra $\sigma(\mathcal{C})$ generated by the family $\mathcal C$ of all cylinder sets in $X=\prod_n S$. We observe for future use that $\F_0\subset\sigma(\mathcal{C})$, hence that $\sigma(\F)\subset\sigma(\mathcal{C})$ as well.

Then $\Pp_B$ is the product probability measure $\bigotimes_{n}\nu_{n,B}$ where $\nu_{n,B}$ is the probability measure on $\B_S$ given by
\[
\nu_{n,B}(E):=\frac{\nu(E\cap B_n)}{\nu(B_n)}=\nu(E|B_n)
\]
for every $E\in \B_S$ and for every $n$. As is well known, if $C=\prod_nC_n$ with $C_n\subset S$ Borel for every $n$, then
$\Pp_B(C)=\prod_{n=1}^\infty \nu_{n,B}(C_n)$ because $C=\bigcap_{N}C^{(N)}$ where $C^{(N)}=C_1\times C_2\times\cdots\times C_N\times S\times S\times\cdots\in\mathcal C$ and $\Pp_B(C^{(N)})=\prod_{n=1}^N\nu_{n,B}(C_n)$
for all $N$.

\par\vspace{2mm}
We define now a premeasure $\mu$ on $\F$.

\begin{definition}\label{2.4}
For $A\in\F_0$, $A=\prod_nA_n$, set $\mu(A):=\prod_{n=1}^\infty 2\nu(A_n)$. For $A\in\bigcup_{n\geq 1}\F_n$, let $B\in\F_0$ be such that $A\subset B$; then set 
\[
\mu(A)=
\begin{cases}
\Pp_B(A)\mu(B) & \text{if}\ B\in \F_{0,+}\\
0 & \text{if}\ \mu(B)=0.
\end{cases}
\]
\end{definition}

We need to check that $\mu$ is well defined.

\begin{lemma}\label{2.5}
Let $A\in\F$. 
\begin{enumerate}
\item [(i)] If there exists $B\in\F_0$ such that $A\subset B$ and $\mu(B)=0$, then $\Pp_C(A)\mu(C)=0$ for every $C\in\F_{0,+}$ such that $A\subset C$.
\item [(ii)] If $B,C\in\F_{0,+}$ are such that $A\subset B\cap C$, one has
\begin{equation}\label{2.1}
\Pp_B(A)\mu(B)=\Pp_C(A)\mu(C).
\end{equation}
\end{enumerate}
\end{lemma}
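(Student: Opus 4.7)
The plan is to show that both parts follow from a single uniqueness-of-measure argument. For each $B\in\F_{0,+}$, set $\mu_B:=\mu(B)\cdot\Pp_B$, a finite measure on $(X,\sigma(\Cc))$ of total mass $\mu(B)$. The key claim is that for any $B,C\in\F_{0,+}$, the two maps $E\mapsto\mu_B(E\cap B\cap C)$ and $E\mapsto\mu_C(E\cap B\cap C)$ coincide as measures on $\sigma(\Cc)$. Once this is established, part~(ii) is immediate: since $A\in\F\subset\sigma(\Cc)$ and $A\subset B\cap C$, we have $A=A\cap B\cap C$ and hence
\[
\Pp_B(A)\mu(B)=\mu_B(A)=\mu_C(A)=\Pp_C(A)\mu(C).
\]

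To prove the key claim, observe that both maps are finite measures on $(X,\sigma(\Cc))$, so by the $\pi$-$\lambda$ theorem it is enough to check agreement on the $\pi$-system of cylinder sets and on the whole space. For a cylinder $E=E_1\times\cdots\times E_N\times S\times S\times\cdots$, letting $F_n:=E_n\cap B_n\cap C_n$ for $n\le N$ and $F_n:=B_n\cap C_n$ for $n>N$, one has $E\cap B\cap C=\prod_n F_n$, and using $F_n\subset B_n$,
\[
\mu_B(E\cap B\cap C)=\prod_n 2\nu(B_n)\cdot\prod_n\frac{\nu(F_n)}{\nu(B_n)}=\prod_n 2\nu(F_n),
\]
the last equality being a trivial cancellation at the level of partial products, justified by the fact that both infinite products on the left converge. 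The resulting expression is symmetric in $B$ and $C$, and taking $E=X$ confirms that both measures have the same total mass $\prod_n 2\nu(B_n\cap C_n)$.

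For part~(i), I would split on whether $B\in\F_{0,+}$. If $B\in\F_{0,+}$, then $\mu_B$ has total mass $\mu(B)=0$ and is therefore identically zero; applying the key claim to $A\subset B\cap C$ gives $\mu_C(A)=\mu_B(A)=0$. If $B\notin\F_{0,+}$, pick an index $k$ with $\nu(B_k)=0$ and note that $A\subset B\subset Y_k:=\{x\in X:x_k\in B_k\}$, with $\Pp_C(Y_k)=\nu(B_k\cap C_k)/\nu(C_k)\le\nu(B_k)/\nu(C_k)=0$, whence $\Pp_C(A)\mu(C)\le\mu_C(Y_k)=0$.

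I expect the main obstacle to be the bookkeeping around the key claim: carefully justifying that the product of the two infinite products $\prod_n 2\nu(B_n)$ and $\prod_n\nu(F_n)/\nu(B_n)$ really equals $\prod_n 2\nu(F_n)$ (which amounts to passing to the limit in a partial-product identity and confirming that all three limits exist), and then correctly invoking $\pi$-$\lambda$ to lift this from cylinder sets to all of $\sigma(\Cc)$.
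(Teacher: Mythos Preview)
Your proof is correct and follows essentially the same route as the paper: both define the finite measures $E\mapsto \Pp_B(E\cap B\cap C)\mu(B)$ and $E\mapsto \Pp_C(E\cap B\cap C)\mu(C)$, verify they agree on product sets by the telescoping computation $\prod_n 2\nu(B_n)\cdot\prod_n\nu(F_n)/\nu(B_n)=\prod_n 2\nu(F_n)$, and then invoke a uniqueness-of-measure argument (you use $\pi$--$\lambda$ on cylinders, the paper uses a monotone-class argument on $\F_0$) to extend to the full $\sigma$-algebra. The only organizational difference is that the paper proves~(i) by a direct one-line estimate $\Pp_C(A)\mu(C)\le\prod_n 2\nu(B_n\cap C_n)\le\prod_n 2\nu(B_n)=0$ before attacking~(ii), whereas you subsume the $\F_{0,+}$-case of~(i) into your key claim and handle $B\notin\F_{0,+}$ separately; both are fine.
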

\begin{proof} (i) If $B$ and $C$ are as above, then 
\begin{align*}
0\leq \Pp_C(A)\mu(C)
&=\Pp_C(A\cap B)\mu(C)\\
& \leq
\Pp_C(B\cap C)\mu(C)\\
&=
\prod_{n=1}^\infty \frac{\nu(B_n\cap C_n)}{\nu(C_n)}\cdot\prod_{n=1}^\infty 2\nu(C_n)\\
&=
\lim_{N\to\infty}\prod_{n=1}^N \frac{2\nu(B_n\cap C_n)}{2\nu(C_n)}\cdot 2\nu(C_n)\\
&=
\prod_{n=1}^\infty 2\nu(B_n\cap C_n)\leq \prod_{n=1}^\infty 2\nu(B_n)=0.
\end{align*}
(ii) Observe first that if $B,C\in\F_{0,+}$ are such that $\mu(B)=\mu(C)=0$, then Equality (\ref{2.1}) holds trivially. 
By (i), it also holds if $\mu(B)\mu(C)=0$.
Thus, it remains to prove that (\ref{2.1}) holds when $A\subset B\cap C$ with $B,C\in\F_{0,+}$ and $\mu(B)\mu(C)>0$. 

We assume first that $A=\prod_nA_n\in\F_0$; then
\begin{align*}
\Pp_B(A)\mu(B)
&=
\prod_{n=1}^\infty \frac{\nu(A_n\cap B_n)}{\nu(B_n)}\cdot\prod_{n=1}^\infty 2\nu(B_n)\\
&=
\lim_{N\to\infty}\prod_{n=1}^N \frac{2\nu(A_n\cap B_n)}{2\nu(B_n)}2\nu(B_n)\\
&=
\prod_{n=1}^\infty 2\nu(\underbrace{A_n\cap B_n}_{=A_n})\\
&=
\mu(A).
\end{align*}
Similarly, we get $\Pp_C(A)\mu(C)=\mu(A)$.

In the last part of the proof, we fix $B,C\in\F_{0,+}$ such that $\mu(B)\mu(C)>0$, and we define two measures $\mu^{(B)}$ and $\mu^{(C)}$ on the $\sigma$-algebra $\sigma(\mathcal{C})$ (which contains $\sigma(\F)$) by
\[
\mu^{(B)}(E):=\Pp_B(B\cap E)\mu(B)\quad \forall \ E\in\sigma(\mathcal{C})
\]
and similarly for $\mu^{(C)}$. Then $\mu^{(B)}(X)=\Pp_B(B)\mu(B)=\mu(B)<\infty$ (resp. $\mu^{(C)}(X)=\mu(C)$) so that they are both finite measures on $\sigma(\mathcal{C})$. 

Set $\mathcal{A}:=\{A\in\sigma(\F): \mu^{(B)}(A\cap B\cap C)=\mu^{(C)}(A\cap B\cap C)\}$. Then the second part shows that $\F_0\subset \mathcal A$, and in particular, since $B\cap C\in\F_0$, one has that $\mu^{(B)}(B\cap C)=\mu^{(C)}(B\cap C)$, which implies that $X\in\mathcal A$.

Let us check that if $A\in\mathcal A$, then $A^c\in\mathcal A$: indeed, as $B\cap C=(A^c\cap B\cap C)\sqcup(A\cap B\cap C)$,
\begin{align*}
\mu^{(B)}(A^c\cap B\cap C)
&=
\mu^{(B)}(B\cap C)-\mu^{(B)}(A\cap B\cap C)\\
&=
\mu^{(C)}(B\cap C)-\mu^{(C)}(A\cap B\cap C)\\
&=
\mu^{(C)}(A^c\cap B\cap C).
\end{align*}
Finally, it is straightforward to check that $\mathcal A$ is a monotone class. It implies that it is a $\sigma$-algebra which contains $\F_0$, hence $\mathcal A=\sigma(\F)$.
\end{proof}

\par\vspace{2mm}
As a consequence of Lemma \ref{2.5} and Caratheodory's theorem, we have:

\begin{proposition}\label{2.6}
The premeasure $\mu:\F\rightarrow \R_+$ is $\sigma$-additive and thus it extends to a measure still denoted by $\mu$ on the $\sigma$-algebra $\sigma(\F)$. However, $\mu$ is not $\sigma$-finite; in particular, it is infinite.
\end{proposition}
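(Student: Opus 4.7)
The plan is to invoke Caratheodory's extension theorem on the semiring $\F$ (a semiring by Lemma \ref{2.3}), so the main task is to check that $\mu$ is a $\sigma$-additive premeasure on $\F$. Given $A\in\F$ with a disjoint decomposition $A=\bigsqcup_n A_n$ into elements of $\F$, I would first use Lemma \ref{2.3}(iii) to enclose $A$ in some $B\in\F_0$. If $\mu(B)=0$, Definition \ref{2.4} (together with the consistency statement of Lemma \ref{2.5}(i) applied to each $A_n\subset A\subset B$) forces $\mu(A_n)=\mu(A)=0$ and the additivity identity holds trivially. Otherwise $B\in\F_{0,+}$, so by Definition \ref{2.4} and Lemma \ref{2.5}(ii) we have $\mu(A)=\Pp_B(A)\mu(B)$ and $\mu(A_n)=\Pp_B(A_n)\mu(B)$ for every $n$. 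The genuine $\sigma$-additivity of the probability measure $\Pp_B$ on $\sigma(\mathcal{C})\supset\sigma(\F)$ then yields $\Pp_B(A)=\sum_n\Pp_B(A_n)$, and multiplying by $\mu(B)$ gives $\sigma$-additivity of $\mu$ on $\F$. Caratheodory's theorem then extends $\mu$ to a measure on $\sigma(\F)$.

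For the non-$\sigma$-finiteness, the strategy is to compare $\mu$ with the ordinary infinite product probability measure $\Pp:=\bigotimes_{n\geq 1}\nu$ on $\sigma(\mathcal{C})$ and to exploit the missing factor $2^N$ that distinguishes the two. The key computation is that for every $A=\prod_n A_n\in\F_0$,
\[
\Pp(A)=\lim_{N\to\infty}\prod_{n=1}^N\nu(A_n)=\lim_{N\to\infty}2^{-N}\prod_{n=1}^N 2\nu(A_n)=0,
\]
because the partial products on the right are bounded (they converge to $\mu(A)\in[0,\infty)$) while $2^{-N}\to 0$. Combined with Lemma \ref{2.3}(iii) and monotonicity of $\Pp$, this shows $\Pp(A)=0$ for every $A\in\F$.

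Finally, I would recall that the extension produced by Caratheodory coincides on $\sigma(\F)$ with the outer measure
\[
\mu^*(E)=\inf\Bigl\{\sum_j\mu(A_j):A_j\in\F,\ E\subset\bigcup_j A_j\Bigr\};
\]
so any $E\in\sigma(\F)$ with $\mu(E)<\infty$ admits a countable cover by elements of $\F$, whence $\Pp(E)\le\sum_j\Pp(A_j)=0$. If $X$ were $\sigma$-finite, say $X=\bigcup_k E_k$ with $\mu(E_k)<\infty$, then $1=\Pp(X)\le\sum_k\Pp(E_k)=0$, a contradiction, which simultaneously forces $\mu(X)=\infty$. The step I expect to require the most care is appealing to this $\F$-outer-measure description on an arbitrary $E\in\sigma(\F)$ of finite $\mu$-measure without circularity; it is a standard but easily overlooked feature of the Caratheodory construction over a semiring, and once it is in place the whole argument reduces to the vanishing of $\Pp$ on $\F_0$.
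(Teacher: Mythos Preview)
Your treatment of $\sigma$-additivity matches the paper's proof almost verbatim: enclose $A$ in some $B\in\F_0$ via Lemma~\ref{2.3}(iii), dispose of the case $\mu(B)=0$, and otherwise transfer $\sigma$-additivity from the probability measure $\Pp_B$ by writing $\mu(\cdot)=\Pp_B(\cdot)\mu(B)$.

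For non-$\sigma$-finiteness the two arguments diverge. The paper fixes $A\in\B_S$ with $\nu(A)=1/2$ and, for each binary sequence $\ep\in\{0,1\}^{\N^*}$, forms $A_\ep=\prod_n A_{\ep_n}$ with $A_0=A$, $A_1=A^c$; these are pairwise disjoint elements of $\F_0$ with $\mu(A_\ep)=1$, and a short counting argument shows that any $\sigma$-finite exhaustion $(X_k)$ could meet only countably many of them in measure at least $1/2$, contradicting $|\{0,1\}^{\N^*}|>\aleph_0$. Your route is more global: you observe that the ordinary product probability $\Pp=\bigotimes_n\nu$ vanishes on every element of $\F_0$ (hence of $\F$), and then use the outer-measure description of the Caratheodory extension to deduce that every $E\in\sigma(\F)$ of finite $\mu$-measure is $\Pp$-null, contradicting $\Pp(X)=1$. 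Both arguments are correct. The paper's is more hands-on and foreshadows later constructions (the sets $A_\ep$ are close relatives of the $X_m$ in Proposition~\ref{2.8} and of the sets in Remark~\ref{nonContinue}); yours is cleaner conceptually, reading the failure of $\sigma$-finiteness as a strong mutual singularity between $\mu$ and $\Pp$, at the cost of invoking the outer-measure characterization of the extension---a standard feature of Caratheodory over a semiring, which you rightly flag as the one step needing care.
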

\begin{proof} Let $(A^{(k)})_{k\geq 1}\subset \F$ be a sequence of pairwise disjoint sets such that $A:=\bigsqcup_{k\geq 1}A^{(k)}$ still belongs to $\F$. Choose $B\in\F_0$ such that $A\subset B$. Then, as $A^{(k)}\subset B$ for every $k$,  if $\mu(B)=0$, one has, by Lemma \ref{2.5}, $\mu(A^{(k)})=\mu(A)=0$ for every $k$. If $\mu(B)>0$, one has
\[
\mu(A)=\Pp_B(A)\mu(B)=\sum_k\Pp_B(A^{(k)})\mu(B)=\sum_k\mu(A^{(k)})
\]
since $\Pp_B$ is $\sigma$-additive. Thus, by Caratheodory's theorem (see for instance Theorem 11.1 of \cite{Bil}), $\mu$ extends to a measure still denoted by $\mu$ on $\sigma(\F)$. 

In order to prove that $\mu$ is not $\sigma$-finite, let us choose $A\in \B_S$ such that $\nu(A)=1/2$, and set $A_0=A$ and $A_1=A^c$. Next, for every sequence $\varepsilon:=(\ep_n)_{n\geq 1}\subset \{0,1\}^{\N^*}=:\mathcal E$, set $A_\ep:=\prod_n A_{\ep_n}\in\F_0$. Then $\mu(A_\ep)=1$ for every $\ep\in\mathcal E$ and $A_\ep\cap A_{\ep'}=\emptyset$ for all $\ep\not=\ep'$. If $\mu$ was $\sigma$-finite, then $\mathcal E$ would be countable: indeed, there would exist an increasing sequence $X_1\subset X_2\subset\ldots\subset X$ such that $\bigcup_k X_k=X$ and $\mu(X_k)<\infty$ for every $k$.
Then, for $k\geq 2$, set $\mathcal{E}_k=\{\ep\in\mathcal E : \mu(X_k\cap A_\ep)\geq 1/2\}$. Then $|\mathcal{E}_k|\leq 2\mu(X_k)$ is finite. As $\lim_{k\to\infty}\mu(X_k\cap A_\ep)=\mu(A_\ep)=1$ for every $\ep$, it follows that $\mathcal E=\bigcup_k\mathcal{E}_k$ would be countable, which is not the case.
\end{proof}

\par\vspace{2mm}
Now we consider the diagonal action of $G$ on $(X,\sigma(\F),\mu)$, \textit{i.e.} 
\[
g\cdot(s_n)_{n\geq 1}=(gs_n)_{n\geq 1}
\] 
for $g\in G$ and $(s_n)_{n\geq 1}\in X=\prod_n S$.


\begin{lemma}\label{2.7}
The action of $G$ on $X$ defined above is measurable. In particular, $G$ acts by measurable automorphisms on $(X,\sigma(\mathcal{F}),\mu)$ and it preserves $\mu$.
\end{lemma}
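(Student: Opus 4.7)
The plan is to verify three things in sequence: first, joint Borel measurability of the action map $G\times X\to X$; second, that for each $g\in G$ the bijection $x\mapsto gx$ preserves $\sigma(\F)$; and third, that $\mu$ is $G$-invariant. The ``in particular'' clause of the statement will then follow immediately from these three points.

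Joint measurability is essentially free. Since $G$ acts continuously on the compact metric space $S$, the diagonal action on $X=\prod_n S$ equipped with the product topology is continuous as a map $G\times X\to X$, hence Borel measurable. Observing that $\sigma(\F)\subset\sigma(\mathcal C)$, the Borel $\sigma$-algebra of $X$, gives the required measurability.

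For the second step, I would prove by induction on $n\ge 0$ that $g\F_n\subset \F_n$ for every $g\in G$. The base case $n=0$ uses $G$-invariance of $\nu$: for $A=\prod_k A_k\in\F_0$, $gA=\prod_k gA_k$ with $\nu(gA_k)=\nu(A_k)$, so the infinite product $\prod_k 2\nu(gA_k)$ coincides with $\prod_k 2\nu(A_k)$ and hence converges (trivially or not). The inductive step is immediate from the identity $g(B\setminus A)=gB\setminus gA$. Applying this to both $g$ and $g^{-1}$ gives $g\F=\F$, and therefore $g\sigma(\F)=\sigma(\F)$; so each $g$ acts as a measurable automorphism of $(X,\sigma(\F))$.

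For the third step, on $\F_0$ the equality $\mu(gA)=\mu(A)$ is immediate from $\nu(gA_n)=\nu(A_n)$. For a general $A\in\F$, pick $B\in\F_0$ with $A\subset B$; then $gA\subset gB\in\F_0$, and if $\mu(B)=0$ the invariance is trivial by Lemma \ref{2.5}(i). Otherwise $B\in\F_{0,+}$ and I would verify that $\Pp_{gB}(gE)=\Pp_B(E)$ for every $E\in\sigma(\mathcal C)$. Indeed, the factor conditional measures satisfy
\[
\nu_{n,gB}(gE)=\frac{\nu(gE\cap gB_n)}{\nu(gB_n)}=\frac{\nu(E\cap B_n)}{\nu(B_n)}=\nu_{n,B}(E)
\]
by invariance of $\nu$, so the pushforward of the product measure $\Pp_B$ under the diagonal action of $g$ equals $\Pp_{gB}$. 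This gives $\mu(gA)=\Pp_{gB}(gA)\mu(gB)=\Pp_B(A)\mu(B)=\mu(A)$. Invariance on the semiring $\F$ together with the uniqueness in Caratheodory's extension theorem (invoked already in Proposition \ref{2.6}) then extends $G$-invariance to all of $\sigma(\F)$. No step here appears to be a serious obstacle; the most delicate ingredient is tracking how the reference set $B$ and its probability measure $\Pp_B$ transform under $g$, but this reduces cleanly to the $G$-invariance of $\nu$.
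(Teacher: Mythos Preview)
Your approach is largely sound and in places cleaner than the paper's: using continuity of the diagonal action for joint measurability, and the induction on $\F_n$ to show that each $g$ preserves $\sigma(\F)$, are both tidier than the paper's explicit computations. The treatment of $\mu$-invariance on $\F$ via the transformation rule $\Pp_{gB}(gE)=\Pp_B(E)$ is essentially identical to the paper's.

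There is, however, a genuine gap in the last step. You extend $G$-invariance from $\F$ to $\sigma(\F)$ by invoking ``uniqueness in Caratheodory's extension theorem''; but that uniqueness requires the premeasure to be $\sigma$-finite on the ring, and Proposition~\ref{2.6} establishes precisely that $\mu$ is \emph{not} $\sigma$-finite. So you cannot conclude that the pushforward $E\mapsto\mu(gE)$, which agrees with $\mu$ on $\Rr(\F)$, coincides with $\mu$ on all of $\sigma(\F)$. The paper avoids this by returning to the explicit outer-measure formula that defines the extension,
\[
\mu(E)=\inf\Bigl\{\textstyle\sum_{m}\mu(B_m):\ (B_m)\subset\Rr(\F),\ E\subset\bigcup_m B_m\Bigr\},
\]
and observing that, since $\Rr(\F)$ is $G$-invariant and $\mu$ is $G$-invariant there, every countable cover of $gE$ by elements of $\Rr(\F)$ has the form $(gB_m)_m$ for a cover $(B_m)_m$ of $E$, with $\sum_m\mu(gB_m)=\sum_m\mu(B_m)$. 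This gives $\mu(gE)=\mu(E)$ directly, with no appeal to uniqueness. Substituting this argument for yours repairs the proof.

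One smaller remark on the first step: continuity shows $\alpha^{-1}$ maps $\sigma(\F)$-sets into $\mathcal{B}(G)\otimes\sigma(\mathcal C)$, not a priori into $\mathcal{B}(G)\otimes\sigma(\F)$, since $\sigma(\F)\subsetneq\sigma(\mathcal C)$ in general. The paper's explicit decomposition aims for the latter target. For what is actually used later (that each $g$ is a $\sigma(\F)$-automorphism, together with Borel joint measurability), your second step already suffices, so this is not a serious issue.
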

\begin{proof}
Let $\alpha : G\times X\rightarrow X$ be defined by $\alpha(g,(x_{n})_{n \ge 1})=(gx_{n})_{n \ge 1}$.
To prove that $\alpha$ is measurable, it is sufficient to see that the preimage of any element of $\mathcal{F}_0$ is measurable.\\
Let $A\in \mathcal{F}_0$ with $\displaystyle A=\prod_{n \ge 1} A_{n}$. For $m\ge 1$, we define a permutation function $\beta_{m} : G\times X \rightarrow G \times X$ by 
\[ 
(g, (x_{n})_{n\ge 1})\mapsto (g,(x_{m},x_{1}, x_{2}, \ldots, x_{m-1}, x_{m+1}, x_{m+2}, \cdots), 
\]
which is clearly measurable.\\
Let us consider now $\alpha^{-1}(A)$; we have:
\begin{align*} 
\alpha^{-1}(A)
&=\left\{(g, (x_{n})_{n\ge 1}):\, \alpha((g, (x_{n})_{n\ge 1}))\in A \right\}\\
&= \left\{(g, (x_{n})_{n\ge 1}):\, (g x_{n})_{n\ge 1}\in A \right\}\\
&=\displaystyle \bigcap_{k \ge 1} \left\{(g, (x_{n})_{n\ge 1}):\, gx_{k}\in A_{k} \right\}.
\end{align*}
For fixed $k$, since the action $\gamma : G\times S \rightarrow S$ given by $\gamma(g,s)=gs$ is continuous hence measurable, we have 
\[  
\left\{(g, (x_{n})_{n\ge 1}):\, gx_{k}\in A_{k} \right\}= \beta_{k}^{-1}\left(\gamma^{-1}(A_{k})\times S \times S \times \ldots\right) \in \mathcal{B}(G)\times \sigma(\mathcal{F}). 
\]
Whence 
\[ 
\alpha^{-1}(A)= \bigcap_{n \ge 1} \beta_{n}^{-1}\left(\gamma^{-1}(A_{n})\times \prod_{k \ge 2} S \right) \in \mathcal{B}(G)\times \sigma(\mathcal{F}).
\]
In particular, if $B\in \sigma(\mathcal{F})$ and for $g\in G$ fixed, then \[gB=(g^{-1})^{-1}B\in \sigma(\mathcal{F})\]
This ends the proof of the measurability of the action of $G$.

Finally, if $A=\prod_n A_n\in\F_0$, then $gA=\prod_n gA_n$, and the equality $\mu(gA)=\mu(A)$ holds since the action of $G$ is diagonal and $\nu$ is preserved by the action of $G$ on $S$. 

If $A\in\F$, let $B\in\F_{0,+}$ be such that $A\subset B$, so that $\mu(A)=\Pp_B(A)\mu(B)$, according to Definition \ref{2.4}. Then $gA\subset gB$ so that
\[
\mu(gA)=\Pp_{gB}(gA)\mu(gB)=\Pp_B(A)\mu(B)=\mu(A)
\] 
for the following reason: For every cylinder set $C=C_1\times\cdots\times C_N\times S\times\cdots\in\sigma(\mathcal C)$, one has
\[
\Pp_{gB}(gC)=\prod_{n=1}^N \frac{\nu(gC_n\cap gB_n)}{\nu(gB_n)}=\Pp_B(C).
\]
This equality holds first for every element of the algebra generated by cylinder sets, hence for every element $C\in\sigma(\mathcal C)$ by uniqueness of probability measures which coincide on given algebras of sets.
In particular, one has $\mu(gA)=\mu(A)$ for every $A\in\F$, hence in the algebra $\Rr(\F)$.

Next, the construction of the extension of $\mu$ to $\sigma(\F)$ is given by
\[
\mu(E)=\inf\left\{\sum_{m\ge 1} \mu(B_m) : (B_m)_{m\ge 1}\subset\mathcal{R}(\F), E\subset \bigcup_m B_m\right\}
\]
for every $E\in\sigma(\F)$. Thus, if $g\in G$ is fixed, one has
\[
\mu(gE)=\inf\left\{\sum_{m\ge 1} \mu(B_m) : (B_m)_{m\ge 1}\subset\mathcal{R}(\F), E\subset \bigcup_m g^{-1}B_m\right\}
=\mu(E)
\]
because the ring $\mathcal{R}(\F)$ is $G$-invariant and every countable covering of $E\in\sigma(\F)$ in the definition of $\mu(E)$ above can be taken of the form
\[
E\subset \bigcup_m g^{-1}B_m
\]
with $(B_m)\subset \mathcal{R}(\F)$ by $G$-invariance of $\mathcal{R}(\F)$.
\end{proof}

\par\vspace{2mm}
We are now ready to prove the first part of Theorem \ref{thmC0} in the general case, namely the existence of a $C_0$-dynamical system with almost invariant vectors, and to finish the proof in the case where $G$ is discrete (hence countable).

\begin{proposition}\label{2.8}
The dynamical system $(X,\sigma(\F),\mu,G)$ is a $C_0$-dynamical system, namely, for all $A,B\in\sigma(\F)$ such that $0<\mu(A),\mu(B)<\infty$ one has
\begin{equation}\label{C0}
\lim_{g\to\infty}\mu(gA\cap B)=0,
\end{equation} and there exists a sequence of unit vectors $(\xi_n)\in L^2(X,\sigma(\F),\mu)$ such that $\xi_n\ge 0$ for every $n$, and for every compact set $K\subset G$ one has
\begin{equation}\label{presquinv}
\lim_{n\to\infty}\sup_{g\in K}\langle \pi_X(g)\xi_n|\xi_n\rangle=1.
\end{equation}
Moreover, if $G$ is discrete, there exists a $G$-invariant subset $\Om\in\sigma(\F)$ such that the restriction of $\mu$ to $\Om$ is $\sigma$-finite and the corresponding Hilbert space $L^2(\Om,\mu)$ contains the sequence $(\xi_n)$ and is separable.
\end{proposition}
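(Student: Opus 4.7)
My plan is to address the three claims of the proposition in turn: the $C_0$ property \eqref{C0}, the existence of almost invariant vectors \eqref{presquinv}, and the $\sigma$-finite separable restriction in the discrete case.

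For \eqref{C0} I would proceed through increasing generality. On $\F_{0,+}$ the computation is already sketched in the introduction: given $A=\prod_n A_n$ and $B=\prod_n B_n$ with convergent infinite products and $\ep>0$, choose $N$ so that $\nu(A_n),\nu(B_n)\in(\tfrac{1}{2}-\ep,\tfrac{1}{2}+\ep)$ for $n\ge N$, then pick $m$ so that $\delta^{m+1}<\ep/\mu(A)$ for a suitable $\delta<1$, and apply the strong mixing assumption (Theorem \ref{thm2.2.2}(a)) to the finitely many coordinates $N\le n\le N+m$ to obtain a compact $K\subset G$ outside of which the displayed product estimate of the introduction yields $\mu(gA\cap B)<\ep$. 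The case $A\in\F_0\setminus\F_{0,+}$ is trivial since then $\mu(A)=0$. I would extend to $\F$ by monotonicity using Lemma \ref{2.3}(iii), to $\Rr(\F)$ by finite additivity, and finally to arbitrary finite-measure sets in $\sigma(\F)$ by outer approximation by elements of $\Rr(\F)$ (as built into Caratheodory's construction; cf.\ the outer-measure formula at the end of the proof of Lemma \ref{2.7}), combined with the $G$-invariance bound $|\mu(gA\cap B)-\mu(gA'\cap B')|\le\mu(A\bigtriangleup A')+\mu(B\bigtriangleup B')$.

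For \eqref{presquinv} I would exploit the asymptotically invariant sequence $(A_n)\subset\B_S$ from Theorem \ref{thm2.2.2}(b). Fix an exhaustion $K_1\subset K_2\subset\cdots$ of $G$ by compact sets with $\bigcup_m K_m=G$, and for each $m$ inductively extract a subsequence $(A_{n_k^{(m)}})_{k\ge 1}$ satisfying $\sup_{g\in K_m}\nu(gA_{n_k^{(m)}}\bigtriangleup A_{n_k^{(m)}})<2^{-k}/m$ for every $k$. Set $X_m:=\prod_k A_{n_k^{(m)}}\in\F_0$ and $\xi_m:=\chi_{X_m}$; since $\nu(A_n)=1/2$, one has $\mu(X_m)=\prod_k 2\nu(A_{n_k^{(m)}})=1$ so $\xi_m$ is a unit vector. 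The $\nu$-invariance of the action yields the identity $2\nu(gA\cap A)=1-\nu(gA\bigtriangleup A)$ whenever $\nu(A)=\tfrac{1}{2}$, hence
\[
\langle\pi_X(g)\xi_m|\xi_m\rangle=\mu(gX_m\cap X_m)=\prod_{k\ge 1}\bigl(1-\nu(gA_{n_k^{(m)}}\bigtriangleup A_{n_k^{(m)}})\bigr).
\]
Applying $\prod_k(1-x_k)\ge 1-\sum_k x_k$ for $x_k\in[0,1]$ gives $\langle\pi_X(g)\xi_m|\xi_m\rangle\ge 1-1/m$ for every $g\in K_m$, and since any compact $K\subset G$ lies in $K_{m_0}$ for some $m_0$, \eqref{presquinv} follows.

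When $G$ is countable, set $\Om:=\bigcup_{g\in G,\,m\ge 1}gX_m$; this is a countable union of measurable sets, hence a $G$-invariant element of $\sigma(\F)$, and writing $\Om=\bigcup_N\bigcup_{g\in F_N,\,m\le N}gX_m$ for an exhaustion $F_N\nearrow G$ by finite sets shows that $\mu|_\Om$ is $\sigma$-finite. To secure separability I would pass to the sub-$\sigma$-algebra $\Ss_\Om$ of $\sigma(\F)|_\Om$ generated by the countable $G$-invariant family $\{gX_m:g\in G,\,m\ge 1\}$: it is countably generated and $G$-invariant, so $L^2(\Om,\Ss_\Om,\mu)$ is separable and manifestly contains $(\xi_m)$, while the $C_0$ property and the almost invariance pass to the restriction. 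I expect the genuinely delicate step to be the outer-approximation argument bridging $\Rr(\F)$ and $\sigma(\F)$ in the $C_0$ claim, since one has to unpack the Caratheodory construction to guarantee that finite-measure sets in $\sigma(\F)$ are $\ep$-approximable in symmetric difference by elements of $\Rr(\F)$; the other steps are direct calculations or standard measure-theoretic manipulations.
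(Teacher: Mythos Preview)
Your proof follows essentially the same route as the paper's. The $C_0$ argument (product estimate on $\F_0$ via strong mixing, extension to $\F$ by monotonicity through Lemma~\ref{2.3}(iii), then to $\sigma(\F)$ by Carath\'eodory outer approximation) and the construction of almost invariant vectors (diagonal extraction of indices $n(k,m)$ from the asymptotically invariant sequence, product sets $X_m=\prod_k A_{n(k,m)}$, unit vectors $\xi_m=\chi_{X_m}$) match the paper almost line for line; your linear bound $\prod_k(1-x_k)\ge 1-\sum_k x_k$ is a harmless simplification of the paper's exponential bookkeeping $\prod_k e^{-1/(m2^k)}=e^{-1/m}$.

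The one point worth flagging is the discrete case. Passing to the sub-$\sigma$-algebra generated by $\{gX_m\}$ is unnecessary and, taken literally, proves a weaker statement than the proposition, which asks for separability of $L^2(\Om,\mu)$ with the full restricted $\sigma$-algebra $\sigma(\F)|_\Om$. In fact that space is already separable: on each $gX_m$ one has $\mu(A)=\Pp_{gX_m}(A)$ for $A\subset gX_m$ (Definition~\ref{2.4}, since $\mu(gX_m)=1$), and $\Pp_{gX_m}$ is a countable product of probability measures on the compact metric space $S$, so $L^2(gX_m,\sigma(\mathcal C)|_{gX_m},\Pp_{gX_m})$ is separable and contains $L^2(gX_m,\sigma(\F)|_{gX_m},\mu)$. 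Since $\Om$ is a countable union of such sets, $L^2(\Om,\mu)$ is separable. The paper itself just asserts this ``since $G$ is countable'' without further detail, so your caution is understandable, but the workaround is not needed.
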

\begin{proof}
Assume first that $A,B\in\F_0$, and write $A=\prod_nA_n$ and $B=\prod_nB_n$, so that 
\[
\mu(A)=\prod_{n \ge 1} 2 \nu(A_n) \quad \text{and} \quad \mu(B)=\prod_{n \ge 1} 2 \nu(B_{n}).
\]
Let $\ep>0$ be fixed and take  $\ep'>0$ small enough in order that $\delta :=\frac{1}{2}+ \ep' +\frac{\ep'}{\frac{1}{2}- \ep'}<1$.

Since $0<\mu(A),\mu(B)<\infty$, there exists $N$ large enough such that 
\[
\frac{1}{2}-\ep'<\nu(A_{n}),\nu(B_{n})<\frac{1}{2}+\ep' \quad \forall n\ge N.
\]
Since $\delta < 1$, there exists $m$ large enough such that $\delta^{m+1}<\frac{\ep}{\mu(A)}$. The action of $G$ on $(S,\nu)$ being strongly mixing, there exist compact sets $K_{n}\subset G$ for all $n\in \left\{N,\ldots, N+m\right\}$ such that
\[
|\nu(gA_{n}\cap B_{n})-\nu(A_{n})\nu(B_{n})|\le \ep' \quad \forall g\in G\setminus K_{n}.
\]
Set $\displaystyle K=\bigcup_{n=N}^{N+m} K_{n}$ which is compact. Then we have for all $g\in G\setminus K$:
\begin{align*}
\mu(gA\cap B)
&\leq 
\prod_{n=1}^{N-1} 2\nu(A_{n})\cdot\prod_{n=N}^{N+m}2(\nu(A_{n})\nu(B_{n})+\ep')\cdot\prod_{n\ge N+m+1}2\nu(A_{n})\\
&= 
\mu(A) \cdot \prod_{n=N}^{N+m} \frac{2\nu(A_{n})\nu(B_{n})+2\ep'}{2\nu(A_{n})}\\
&=
\mu(A)\prod_{n=N}^{N+m} \left(\nu(B_{n})+\frac{\ep'}{\nu(A_{n})}\right)\\
&< 
\mu(A)\prod_{n=N}^{N+m} \left(\frac{1}{2}+ \ep' +\frac{\ep'}{\frac{1}{2}- \ep'}\right)\\
&=
\mu(A)\delta^{m+1}<\ep.
\end{align*}
Thus we have 
\[
\lim_{g\rightarrow \infty} \mu(gA\cap B)=0 \quad \forall A,B\in \mathcal{F}_0.
\]

The same claim holds for $A,B\in\F$ since there exist $C,D\in\F_0$ such that $A\subset C$ and $B\subset D$ and $\mu(gA\cap B)\le \mu(gC\cap D)\to 0$ as $g\to\infty$. Moreover, Equality (\ref{C0}) also holds for all elements of the ring $\Rr(\F)$. 

Finally, if $A,B\in\sigma(\F)$ are such that $0<\mu(A),\mu(B)<\infty$, by construction of the measure $\mu$ on $\sigma(\F)$, if $\ep>0$ is given, there exist two sequences $(C_k)_{k\geq 1},(D_\ell)_{\ell\ge 1}\subset \Rr(\F)$ such that 
\[
A\subset \bigcup_{k\geq 1}C_k\quad\text{and}\quad B\subset\bigcup_{\ell\ge 1}D_\ell
\]
and
\[
\mu(A)\leq \sum_k\mu(C_k)<\mu(A)+\ep\quad\text{and}\quad \mu(B)\le\sum_\ell \mu(D_\ell)<\mu(B)+\ep.
\]
Choose first $N$ large enough so that $\sum_{\ell>N}\mu(D_\ell)<\ep/3$. Then, as
\[
gA\cap B\subset \left(\bigcup_{\ell=1}^N gA\cap D_\ell\right) \cup \left(\bigcup_{\ell>N}D_\ell\right),
\]
we get
\[
\mu(gA\cap B)\le \sum_{\ell=1}^N\mu(gA\cap D_\ell)+\ep/3.
\]
Choose next $M$ large enough so that $\sum_{k>M}\mu(C_k)<\ep/3N$. Then, as
\[
gA\cap D_\ell\subset \left(\bigcup_{k=1}^M gC_k\cap D_\ell\right)\cup\left(\bigcup_{k>M}gC_k\right)
\]
for every $1\leq\ell\leq N$, and since $\mu$ is $G$-invariant, we get
\[
\mu(gA\cap B)\leq \sum_{\ell=1}^N\sum_{k=1}^M\mu(gC_k\cap D_\ell)+2\ep/3
\]
for every $g\in G$. By the previous part of the proof, there exists a compact set $K\subset G$ such that 
\[
\mu(gA\cap B)<\ep\quad \forall g\in G\setminus K.
\]
This ends the proof of the first claim of the proposition.

Let us prove now the existence of the sequence $(\xi_n)\subset L^2(X,\sigma(\F),\mu)$ which satisfies \eqref{presquinv}. 

The probability standard space $(S,\nu)$ contains an asymptotically invariant sequence $(A_{n})_{n \ge 1 } \subset \mathcal{B}_S$ such that $\nu(A_{n})=\frac{1}{2}$ for all $n$, and for every compact set $K \subset G$,
\begin{equation}
\label{lemme4.2}
\sup_{g\in K }\nu(gA_{n} \cap A_{n} )\rightarrow \frac{1}{2} \quad \text{as} \quad n\rightarrow \infty.
\end{equation}
Since $G$ is a locally compact, second countable group, we choose an increasing sequence of compact sets $(K_{n})_{n\ge 1}\subset G$ such that $\displaystyle G=\bigcup_{n\ge 1}K_{n}$ and such that for every compact set $K\subset G$, there exists $m\ge 1$ such that $K\subset K_m$. Consequently, by \eqref{lemme4.2}, for all $m,k\ge 1$ there exists an integer $n(k,m)$ such that 
\[
\left|\nu(gA_{n(k,m)} \cap A_{n(k,m)}) - \frac{1}{2}\right|\le \frac{1}{2}\left(1-e^{-\frac{1}{m2^{k}}}\right)\quad \forall g\in K_{m}.
\]
Then we set for all $m$ 
\[
\xi_{m}=\chi_{\prod_{k \ge 1} A_{n(k,m)}} \in  L^{2}(X,\mu).
\]
By construction, $0\leq \xi_m\leq 1$ and $\|\xi_{m}\|_{2}=1$ for all $m$.\\
Now let $K$ be a compact subset of $G$; then for every integer $m\ge 1$ such that $K\subset K_{m}$, 
we have:
\begin{align*}
1\geq \langle \pi_{X}(g)\xi_{m}| \xi_{m} \rangle 
&= 
\int_{X} \chi_{\prod_{k \ge 1} A_{n(k,m)}}(g^{-1}x)\chi_{\prod_{k \ge 1} A_{n(k,m)}}(x)d\mu(x)\\
&= 
\int_{X} \chi_{\prod_{k \ge 1}\left(gA_{n(k,m)}\cap A_{n(k,m)}\right)}(x)d\mu(x)\\
&=\mu\left(\prod_{k \ge 1}\left(gA_{n(k,m)}\cap A_{n(k,m)}\right)\right)=\prod_{k\ge 1} 2\nu(gA_{n(k,m)}\cap A_{n(k,m)})\\
&\ge 
\prod_{k \ge 1} 2\left(\frac{1}{2} - \frac{1}{2}(1-e^{-\frac{1}{m2^{k}}})\right)=\prod_{k\ge 1} e^{-\frac{1}{m2^{k}}}\\
&=
e^{-\sum_{k=1}^{\infty}\frac{1}{m2^{k}}}=e^{-\frac{1}{m}} \rightarrow_{m \rightarrow \infty} 1,
\end{align*}
uniformly on $K$, where the first inequality follows from the Cauchy-Schwarz Inequality.

Assume now that $G$ is discrete, hence countable; set
\[
\Omega=\bigcup_{g\in G} \bigcup_{m\ge 1}g\cdot X_m
\]
where $X_m=\prod_{k}A_{n(k,m)}$ is the support of $\xi_m$ for every $m$. Then $\Om$ is $G$-invariant, the restriction of $\mu$ to $\Om$ is $\sigma$-finite and $L^2(\Om,\mu)$ is separable since $G$ is countable, and it contains the sequence $(\xi_m)$ by construction.
\end{proof}

\par\vspace{2mm}

Thus, the proof of Theorem \ref{thmC0} is complete in the case where $G$ is discrete, hence we will focus on the case where $G$ is not discrete.

\begin{remark}\label{nonContinue}
We are very grateful to Alessandro Calderi and Alain Valette for having kindly communicated the present remark. The associated representation $\pi_X$ of $G$ on $L^2(X,\sigma(\F),\mu)$ is not continuous in general. Indeed, if it was continuous, then we would have
\[
\lim_{g\rightarrow e}\langle\pi_X(g)\xi|\xi\rangle =\|\xi\|^2_2
\]
for every element $\xi\in L^2(X,\mu)$, and in particular $\lim_{g\rightarrow e}\mu(gB\cap B)=1$ for every $B\in\F_0$ with $\mu(B)=1$. Choose a Borel set $A\subset S$ such that $\nu(A)=1/2$, and set $B=\prod_nA\in\F_0$. If $g\in G$ is such that 
$\nu(gA\bigtriangleup A)>0$, we have $\mu(gB\cap B)=0$ since $\nu(gA\cap A)<1/2$. If we can make $g\to e$, we have proved that $\pi_X$ is not continuous. It is the case for $G=S=S^1$ equipped with its normalized Lebesgue measure.

We also observe that, in the case where $G$ is countable, provided that we put $X_0=B=\prod_n A$ as above and we take $\Omega=\bigcup_{g\in G} \bigcup_{m\ge 0}g\cdot X_m$, the unit vector $\chi_B\in L^2(\Omega,\mu)$ satisfies the following condition:
\[
\f_B(g):=\langle\pi_X(g)\chi_B|\chi_B\rangle=
\left\{
\begin{array}{cc}
1 & g=e\\
0 & g\neq e.
\end{array}\right.
\]
This means that $\f_B$ is the positive definite function $\delta_e$ whose GNS construction is the (left) regular representation of $G$. Hence, the left regular representation of $G$ is a subrepresentation of $\pi_\Omega$.

If $G$ is not discrete, we see no reason why $\pi_\Om$ should contain the regular representation, as in this case there is no analogue of the function $\delta_e$.
\end{remark}

\section{Proof of Theorem \ref{thmC0}, Part 2}

Before proving the last part of our main theorem, let us make the following comment which we owe to S. Baaj: Recall that if $G$ be a lcsc group that acts measurably on some measure space $(\Om,\mu)$ where $\mu$ is $G$-invariant and $\sigma$-finite, and if $L^2(\Om,\mu)$ is separable, then the associated unitary representation $\pi_\Om$ is automatically continuous. This follows from Proposition A.6.1 of \cite{BHV} for instance. 

This means that, if we had been able to restrict our action of $G$ to such a measure space, then the proof of Theorem \ref{thmC0} would be complete. Unfortunately, we were unable to do that, and thus we have to proceed as follows: we define a subfamily $\F_c$ of $\F$ and a measure $\mu_c$ on $\sigma(\F_c)$ so that the permutation representation on $L^2(X,\sigma(\F_c),\mu_c)$ is continuous, and then, using continuity, we are able to restrict the action of $G$ to a $\sigma$-finite measure subspace of $X$ which has all the desired properties.

\par\vspace{2mm}
Before defining the above mentionned family $\F_c$, we fix an increasing sequence of compact sets $(K_n)_{n\ge 1}$ of $G$ with the following properties: $e\in \mathring{K}_1$, $K_n\subset \mathring{K}_{n+1}$ for every $n\ge 1$ and $G=\bigcup_{n\geq 1}K_n$.
We also set $K:=K_1$ which is a compact neighbourhood of $e$.

\begin{definition}
\begin{enumerate}
\item [(1)] The family $\F_{c,0}$ is the collection of sets $A=\prod_nA_n\in\F_0$ such that $\mu(A)=\prod_n2\nu(A_n)=0$ or 
that 
\[
\lim_{N\to\infty}\prod_{n=N}^\infty 2\nu(gA_n\cap A_n)=1
\]
uniformly for $g\in K$.
\item [(2)] We define the sequence $(\F_{c,n})_{n\ge 1}$ of collections of subsets of $X$ by induction: $\F_{c,n}:=\{B\setminus A : A,B\in\F_{c,n-1}\}$, and we set finally $\F_c=\bigcup_{n\ge 0}\F_{c,n}$.
\end{enumerate}
 We also denote by $\sigma(\F_c)$ the $\sigma$-algebra generated by $\F_c$; it is a sub-$\sigma$-algebra of $\sigma(\F)$.
\end{definition}

We observe that the sequence $(\prod_{k\ge 1}A_{n(k,m)})_{m\ge 1}$ constructed in the proof of Proposition \ref{2.8} is contained in $\F_{c,0}$. Indeed, we have proved that 
\[  
2\nu(gA_{n(k,m)} \cap A_{n(k,m)}) \in \left[e^{-\frac{1}{m2^{k}}}, 1 \right] \quad \forall g\in K. 
\]
Hence, we get
\begin{align*}
1
&\ge 
\lim_{N\rightarrow \infty } \prod_{k=N}^{\infty} 2\nu(gA_{n(k,m)} \cap A_{n(k,m)}) \\
&\ge 
\lim_{N\rightarrow \infty } \prod_{k=N}^{\infty} e^{-\frac{1}{m2^{k}}} \\
&= 
e^{-\frac{1}{m} \lim_{N\rightarrow \infty } \sum_{k=N}^{\infty} \frac{1}{2^{k}}}=1
\end{align*}
uniformly on $K$. This proves among others that the associated sequence of vectors $(\xi_m)_{m\ge 1}\subset L^2(X,\sigma(\F_c),\mu)$ is an almost invariant sequence of unit vectors.

\par\vspace{2mm}
In order to see that $\F_{c,0}$ is stable under finite intersections, we need the following elementary lemma.

\begin{lemma}\label{convergence}
Let $(x_k)_{k\ge 1},(a_k)_{k\geq 1}\subset\R^+$ be sequences such that
\[
\lim_{N\to\infty}\prod_{k\ge N} x_{k}=\lim_{N\to\infty}\prod_{k\ge N} (1-a_{k})=1.
\]
Then
\[
\lim_{N\to\infty}\prod_{k\ge N} (x_{k}-a_{k})=1.
\]
\end{lemma}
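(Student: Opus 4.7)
The plan is to factor $x_k - a_k = x_k(1 - a_k/x_k)$ and handle the two resulting products separately. Since both tail products $\prod_{k\ge N} x_k$ and $\prod_{k\ge N}(1-a_k)$ tend to $1$, the sequences $x_k$ and $a_k$ converge to $1$ and $0$ respectively, so I can fix $N_0$ large enough that $x_k \ge 1/2$ and $a_k \le 1/2$ for all $k \ge N_0$; in particular each $x_k - a_k$ is strictly positive from there on, and the tails $\prod_{k\ge N}(x_k - a_k)$ are well defined for $N \ge N_0$.

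The first substantive step is to extract summability of the $a_k$'s from the product hypothesis. Using the elementary comparison $-\log(1-t) \ge t$ on $[0,1)$, the convergence $\prod_{k\ge N}(1-a_k)\to 1$ yields $\sum_{k\ge N} a_k \to 0$ as $N \to \infty$. With this in hand I would write
\[
\prod_{k\ge N}(x_k - a_k) = \prod_{k\ge N} x_k \cdot \prod_{k\ge N}\left(1 - \frac{a_k}{x_k}\right),
\]
so that the first factor already tends to $1$ by hypothesis and the problem reduces to showing that the second factor does as well.

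For this remaining factor, the bound $a_k/x_k \le 2a_k$ (valid for $k \ge N_0$) together with the tail summability above gives $\sum_{k\ge N} a_k/x_k \to 0$. A logarithmic comparison such as $\log(1-t) \ge -2t$ on $[0,1/2]$ then forces $\prod_{k\ge N}(1 - a_k/x_k) \to 1$, and combining with the hypothesis on $\prod x_k$ yields the conclusion.

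I do not anticipate any serious obstacle: the statement is essentially a bookkeeping exercise that reduces, for $u_k \in [0,1/2]$, to the standard equivalence between $\prod_{k\ge N}(1-u_k)\to 1$ and $\sum_{k\ge N} u_k \to 0$. The only mild care required is choosing $N_0$ large enough before any logarithmic inequality is invoked, and making sure the factorization through $x_k(1-a_k/x_k)$ is carried out on that same tail.
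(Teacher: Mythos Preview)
Your approach is correct and the sketch would go through: extracting tail-summability of $(a_k)$ from $\prod_{k\ge N}(1-a_k)\to 1$, then factoring $x_k-a_k=x_k(1-a_k/x_k)$, reduces the claim to the standard dictionary between infinite products and series. One small point of bookkeeping: with only $x_k\ge 1/2$ and $a_k\le 1/2$ you get $a_k/x_k\le 1$, not $\le 1/2$, so before invoking $\log(1-t)\ge -2t$ on $[0,1/2]$ you should tighten $N_0$ (e.g.\ to $a_k\le 1/4$); this is exactly the ``mild care'' you already flag.

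The paper's argument is different and more direct. Rather than pass through logarithms and summability, it proves the pointwise sandwich
\[
x_k(1-a_k)^2 \;\le\; x_k - a_k \;\le\; x_k,
\]
valid once $x_k>2/3$ and $a_k<1/2$ (the lower inequality comes from $(-2a_k+a_k^2)x_k\le -\tfrac{3}{2}a_kx_k\le -a_k$). Multiplying over $k\ge N$ and letting $N\to\infty$ then squeezes $\prod_{k\ge N}(x_k-a_k)$ between $\prod_{k\ge N}x_k(1-a_k)^2\to 1$ and $\prod_{k\ge N}x_k\to 1$. This is shorter and entirely elementary---no logarithms, no product/sum equivalence---whereas your route makes the mechanism (tail summability of the perturbations $a_k$) explicit and would adapt more readily to variants.
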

\begin{proof} There exists $N_0$ so that $x_k>2/3$ and $0\leq a_k<1/2$ for all $k\geq N_0$. This implies immediately that
\[
(-2a_{k} +a_{k}^{2})x_{k} \le -\frac{3}{2}a_{k}x_{k}\le -a_{k}
\]
for all $k\geq N_0$, and we get for all $N\ge N_0$
\[
\prod_{k \ge N} x_{k}(1-a_{k})^{2} \le \prod_{k \ge N} (x_{k}-a_{k})\le \prod_{k \ge N} x_{k}
\]
which proves the claim.
\end{proof}

\begin{lemma}\label{intersection}
Let $A,B\in\F_{c,0}$. Then $A\cap B\in\F_{c,0}$.
\end{lemma}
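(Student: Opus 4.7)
The plan is to reduce everything to the tail estimate of Lemma~\ref{convergence}. Since Lemma~\ref{2.2} already gives $A\cap B\in\F_0$, the only content is the uniform tail condition defining $\F_{c,0}$.

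Writing $A=\prod_n A_n$ and $B=\prod_n B_n$, and setting $a_n=2\nu(A_n)$, $b_n=2\nu(B_n)$, $c_n=2\nu(A_n\cap B_n)$, the case $\mu(A\cap B)=0$ falls under the first clause in the definition of $\F_{c,0}$. In the remaining case all three infinite products are positive, $a_n,b_n,c_n\to 1$, and $\prod_{n\ge N}c_n\to 1$ as $N\to\infty$. Setting $\tilde a_n(g):=a_n-2\nu(gA_n\cap A_n)=2\nu(A_n\setminus gA_n)$ and analogously $\tilde b_n(g)$, the identity $g(A_n\cap B_n)\cap(A_n\cap B_n)=(gA_n\cap A_n)\cap(gB_n\cap B_n)$ combined with the inclusion $(A_n\cap B_n)\setminus(gA_n\cap gB_n)\subseteq (A_n\setminus gA_n)\cup(B_n\setminus gB_n)$ yields the pointwise lower bound
\[
2\nu\bigl(g(A_n\cap B_n)\cap(A_n\cap B_n)\bigr)\ge c_n-\tilde a_n(g)-\tilde b_n(g).
\]

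Next I would upgrade the hypothesis $A\in\F_{c,0}$ to a summability statement: dividing the uniform convergence $\prod_{n\ge N}(a_n-\tilde a_n(g))\to 1$ by $\prod_{n\ge N}a_n\to 1$ gives $\prod_{n\ge N}(1-\tilde a_n(g)/a_n)\to 1$ uniformly in $g\in K$, and since $a_n\to 1$ the elementary inequality $\log(1-x)\le -x$ upgrades this to $\sum_{n\ge N}\tilde a_n(g)\to 0$ uniformly, and similarly for $\tilde b_n$. Consequently $\prod_{n\ge N}(1-\tilde a_n-\tilde b_n)\to 1$ uniformly in $g\in K$, so Lemma~\ref{convergence} applied with the data $x_n\leftarrow c_n$ and $a_n\leftarrow\tilde a_n(g)+\tilde b_n(g)$ delivers $\prod_{n\ge N}(c_n-\tilde a_n-\tilde b_n)\to 1$ uniformly in $g\in K$. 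Squeezing the target tail product $\prod_{n\ge N}2\nu(g(A_n\cap B_n)\cap(A_n\cap B_n))$ between this lower quantity and $\prod_{n\ge N}c_n$ (which also tends to $1$) closes the argument.

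The main obstacle I foresee is the uniformity bookkeeping in $g\in K$: the passage from \emph{tail product tends to $1$} to \emph{tail sum tends to $0$} must be carried out quantitatively via the $-\log/-x$ comparison, so that the uniformity hypothesis on $A$ and $B$ survives intact all the way through the final squeeze.
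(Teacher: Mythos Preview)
Your argument is correct and follows the same architecture as the paper: the same pointwise lower bound $2\nu(g(A_n\cap B_n)\cap(A_n\cap B_n))\ge c_n-\tilde a_n-\tilde b_n$, the same appeal to Lemma~\ref{convergence} with $x_n=c_n$, and the same squeeze against $\prod_{n\ge N}c_n$. The uniformity bookkeeping you flag is harmless, since the sandwich inequality in the proof of Lemma~\ref{convergence} holds as soon as $x_k>2/3$ and $a_k<1/2$, and those thresholds are reached uniformly in $g\in K$ under your hypotheses.

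The one place you diverge from the paper is the intermediate step showing $\prod_{n\ge N}(1-\tilde a_n-\tilde b_n)\to 1$. The paper stays multiplicative throughout: it proves $\prod_{n\ge N}\bigl(1-\tilde a_n/c_n\bigr)\to 1$ and the $B$-analogue separately, each time by multiplying by a ratio like $c_n/b_n$ whose tail product is already known to tend to $1$ and then invoking Lemma~\ref{convergence} once more; a final application of the same lemma combines the two. You instead pass through the additive statement $\sum_{n\ge N}\tilde a_n(g)\to 0$ uniformly via $\log(1-x)\le -x$, which is a clean shortcut that avoids the iterated product manipulations. Both routes are equally rigorous; yours is a bit more transparent about why the error terms are negligible.
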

\begin{proof} Let $A,B \in \F_{c,0}$ where $A=\prod_{n} A_{n}$ and $B=\prod_{n} B_{n}$.\\
If $A$ or $B$ has measure zero, then $\mu(A\cap B)=0$ and $A\cap B \in \mathcal{F}_{c,0}$. We assume that $\mu(A \cap B)\neq 0$. Thus, we have that the five sequences
\begin{equation*}
\prod_{n=N}^{\infty} 2 \nu(gA_{n} \cap A_{n}), \prod_{n=N}^{\infty} 2 \nu(gB_{n} \cap B_{n}), \prod_{n=N}^{\infty} 2 \nu(A_{n} \cap B_{n}), \prod_{n=N}^{\infty} 2 \nu(A_{n}), \prod_{n=N}^{\infty} 2 \nu(B_{n})
\end{equation*}
converge to $1$ uniformly on $K$ as $N\to\infty$.\\
As $\nu(A_{n}),\nu(B_{n})\neq 0$ for every $n$, we have
\begin{equation*}
\prod_{n=N}^{\infty} \frac{2 \nu(gA_{n} \cap A_{n})}{2\nu(A_{n})}, \prod_{n=N}^{\infty} \frac{2 \nu(gB_{n} \cap B_{n})}{2\nu(B_{n})}, \prod_{n=N}^{\infty} \frac{2 \nu(A_{n} \cap B_{n})}{2\nu(A_{n})} \text{ and } \prod_{n=N}^{\infty} \frac{2 \nu(A_{n} \cap B_{n})}{2\nu(B_{n})}
\end{equation*}
converge to $1$ uniformly on $K$ as $N\to\infty$.\\
Moreover,
\begin{align*}
1 &\ge 
\lim_{N \rightarrow \infty} \prod_{n=N}^{\infty} \frac{2 \nu(g(A_{n}\cap B_{n}) \cap (A_{n}\cap B_{n}))}{2\nu(A_{n}\cap B_{n})} \\
&\ge 
\lim_{N \rightarrow \infty} \prod_{n=N}^{\infty} \frac{2 \nu(A_{n}\cap B_{n}) - 2 \nu(A_{n}\cap B_{n}\cap gA_{n}^{c})-2\nu(A_{n}\cap B_{n} \cap gA_{n} \cap gB_{n}^{c})}{2\nu(A_{n}\cap B_{n})}\\
& \ge 
\lim_{N \rightarrow \infty} \prod_{n=N}^{\infty} \left(1- \frac{2\nu(A_{n}\setminus gA_{n})}{2 \nu(A_{n}\cap B_{n})} - \frac{2 \nu(B_{n}\setminus gB_{n})}{2 \nu(A_{n}\cap B_{n})}\right)
\end{align*}
Using Lemma \ref{convergence}, we will show that 
\begin{equation}\label{eq}
\lim_{N \rightarrow \infty} \prod_{n=N}^{\infty} \left(1- \frac{2\nu(A_{n}\setminus gA_{n})}{2 \nu(A_{n}\cap B_{n})} - \frac{2 \nu(B_{n}\setminus gB_{n})}{2 \nu(A_{n}\cap B_{n})}\right)=1.
\tag{$\ast$}
\end{equation}
One has to check that
\begin{enumerate}
\item $\lim_{N \rightarrow \infty} \prod_{n=N}^{\infty}  \left(1- \frac{2 \nu(B_{n}\setminus gB_{n})}{2 \nu(A_{n}\cap B_{n})}\right) =1;$
\item $\lim_{N \rightarrow \infty} \prod_{n=N}^{\infty}  \left(1- \frac{2\nu(A_{n}\setminus gA_{n})}{2 \nu(A_{n}\cap B_{n})}\right) =1.$
\end{enumerate}
We prove (1) in detail; as the proof of (2) is similar, we will get $(\ast)$. One has
\begin{align*}
\lim_{N \rightarrow \infty} 
&
\prod_{n=N}^{\infty}  \left(1- \frac{2 \nu(B_{n}\setminus gB_{n})}{2 \nu(A_{n}\cap B_{n})}\right)\\
&= 
\lim_{N \rightarrow \infty} \prod_{n=N}^{\infty}  \left(1- \frac{2 \nu(B_{n}\setminus gB_{n})}{2 \nu(A_{n}\cap B_{n})}\right) \lim_{N \rightarrow \infty} \prod_{n=N}^{\infty} \frac{2 \nu(A_{n} \cap B_{n})}{2\nu(B_{n})} \\
&= 
\lim_{N \rightarrow \infty} \prod_{n=N}^{\infty} \left(\frac{2\nu(A_{n}\cap B_{n})- 2\nu(B_{n}\setminus gB_{n})}{2\nu(B_{n})}\right). \\
\end{align*}
Moreover,
\begin{align*}
\lim_{N \rightarrow \infty} 
&
\prod_{n=N}^{\infty} \left(1- \frac{2\nu(B_{n}\setminus gB_{n})}{2\nu(B_{n})}\right)\\
&=
\lim_{N \rightarrow \infty} \prod_{n=N}^{\infty} \left(\frac{2\nu(B_{n})- 2\nu(B_{n}\setminus gB_{n})}{2\nu(B_{n})}\right)\\
&=
\lim_{N \rightarrow \infty} \prod_{n=N}^{\infty} \frac{2\nu(B_{n}\cap gB_{n})}{2\nu(B_{n})}=1.
\end{align*}
As observed above, this ends the proof.
\end{proof}

\par\vspace{2mm}
Next, exactly the same arguments as those in Section 2 from Lemma 2.3 to Proposition 2.8 allow to prove the following facts:

\begin{proposition}\label{resume}
The family $\F_c$ has the following properties:
\begin{enumerate}
\item [(i)] For all $A,B\in\F_c$, one has $A\cap B\in\F_c$ and $A\setminus B\in\F_c$, so that $\F_c$ is a semiring, and the set of all finite disjoint unions of elements of $\F_c$ is the ring generated by $\F_c$ which is denoted by $\Rr(\F_c)$.
\item [(ii)] The diagonal action of $G$ on $X=\prod_{n\geq 1}S$ is $\sigma(\F_c)$-measurable.
\item [(iii)] There exists a measure $\mu_c:\sigma(\F_c)\rightarrow [0,\infty]$ which is $G$-invariant and such that $\mu_c(\prod_n A_n)=\prod_n 2\nu(A_n)$ for every $A=\prod_n A_n\in \F_{c,0}$.
\item [(iv)] The dynamical system $(X,\sigma(\F_c),\mu_c,G)$ is a $C_0$-dynamical system.
\item [(v)] Let $(A_{n(k,m)})_{k,m}$ be the family constructed in the proof of Proposition \ref{2.8}, set $X_m=\prod_k A_{n(k,m)}$ and $\xi_m=\chi_{X_m}$ for every $m\ge 1$. Then $(\xi_m)\subset L^2(X,\sigma(\F_c),\mu_c)$ is a sequence of almost invariant unit vectors.
\end{enumerate}
\end{proposition}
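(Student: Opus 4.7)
The plan is to mirror Section 2 step by step, from Lemma \ref{2.3} through Proposition \ref{2.8}, replacing $\F_0, \F, \mu$ by $\F_{c,0}, \F_c, \mu_c$ throughout. Lemma \ref{intersection} has already supplied the analog of Lemma \ref{2.2}, namely stability of $\F_{c,0}$ under finite intersections, which is the base case for everything that follows.

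For (i), I would argue by induction on $n$ that $A, B \in \F_{c,n}$ implies $A \cap B \in \F_c$. The inductive step is identical to the one in Lemma \ref{2.3}: if $A = A_1 \setminus A_2$ and $B = B_1 \setminus B_2$ with the $A_i, B_i \in \F_{c,n-1}$, then $A \cap B = ((A_1 \cap B_1) \setminus A_2) \setminus B_2$, and the inductive hypothesis on $A_1 \cap B_1$ places the result in some $\F_{c,m+2}$. Closure under set difference is built into the definition of $\F_c$, so $\F_c$ is a semiring, and the standard fact about rings generated by semirings yields the description of $\Rr(\F_c)$. For (ii), the measurability argument of Lemma \ref{2.7} only exploits the product structure of elements of $\F_{c,0}$ together with continuity of the action of $G$ on $S$, so it applies verbatim to $\F_{c,0}$ and hence to $\sigma(\F_c)$.

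For (iii), I would define $\mu_c$ on $\F_{c,0}$ by $\mu_c(\prod_n A_n) = \prod_n 2\nu(A_n)$ and extend to $\F_c$ exactly as in Definition \ref{2.4}, using the product probability measures $\Pp_B$ on $\sigma(\mathcal{C})$. Well-definedness is the analog of Lemma \ref{2.5}: the crucial observation is that the monotone class argument of that lemma lives inside $\sigma(\mathcal{C})$, not inside $\sigma(\F)$, and $B \cap C \in \F_0 \subset \sigma(\mathcal{C})$ for any $B, C \in \F_{c,0} \subset \F_0$, so the argument transfers without change. Carath\'eodory's theorem then yields the extension to $\sigma(\F_c)$, and $G$-invariance follows from the calculation at the end of Lemma \ref{2.7}. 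For (iv), the multi-stage proof of Proposition \ref{2.8} --- first on $\F_{c,0}$ using strong mixing together with the choice $\delta^{m+1} < \ep/\mu_c(A)$, then on $\F_c$ by domination, then on $\sigma(\F_c)$ by approximation with $\Rr(\F_c)$-coverings --- carries over line by line. Item (v) is already essentially contained in the paragraph preceding Lemma \ref{convergence}: there we verified that $X_m = \prod_k A_{n(k,m)} \in \F_{c,0}$, so $\xi_m \in L^2(X, \sigma(\F_c), \mu_c)$, and the computation in Proposition \ref{2.8} showing that $\langle \pi_X(g)\xi_m | \xi_m\rangle \to 1$ uniformly on compact sets transfers verbatim.

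The only genuine point of vigilance is the well-definedness in (iii). One might fear that after shrinking $\F_0$ to $\F_{c,0}$, two enveloping sets $B, C \in \F_{c,0}$ of a common subset might no longer be comparable through an element of $\F_{c,0}$. But because the comparison in Lemma \ref{2.5} is effected by a monotone class argument performed in the ambient $\sigma(\mathcal{C})$ (which automatically contains $B \cap C \in \F_0$), this difficulty evaporates, and the remainder of the proof is a direct transcription of the earlier arguments.
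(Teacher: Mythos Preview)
Your proposal is correct and matches the paper's approach exactly: the paper itself offers no detailed proof, stating only that ``exactly the same arguments as those in Section 2 from Lemma \ref{2.3} to Proposition \ref{2.8} allow to prove the following facts,'' and you have carefully spelled out how each of those arguments transfers. Your caution about well-definedness in (iii) is well-placed, though note that Lemma \ref{intersection} already gives $B\cap C\in\F_{c,0}$ directly, so the comparison needed for the analog of Lemma \ref{2.5} can be run entirely within $\F_{c,0}$ without appealing to the ambient $\sigma(\mathcal C)$.
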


We are ready to prove that the representation $\pi_X$ on $L^2(X,\sigma(\F_c),\mu_c)$ is continuous. To do this, it suffices to prove that, for every $A\in\sigma(\F_c)$ such that $\mu_c(A)<\infty$, one has
\[
\Vert \pi_X(g)\chi_A-\chi_A\Vert_2^2=\int_X|\chi_{gA}-\chi_A|^2d\mu_c=
\int_X|\chi_{gA}-\chi_A|d\mu_c=
\mu_c(gA\bigtriangleup A){\to} 0
\]
as $g\to e$.

\begin{proposition}\label{continuite}
Let $A\in\sigma(\F_c)$ be such that $\mu_c(A)<\infty$. Then 
\[
\lim_{g\to e}\mu_c(gA\bigtriangleup A)=0.
\]
\end{proposition}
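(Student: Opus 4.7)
The plan is to establish the claim first for the elementary family $\F_{c,0}$, then propagate it through $\F_c$, through the ring $\Rr(\F_c)$, and finally to all of $\sigma(\F_c)$ by a standard symmetric-difference approximation.

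\textbf{Step 1: Reduction of $\triangle$ to $\cap$.} Since $\mu_c$ is $G$-invariant, whenever $\mu_c(A) < \infty$ one has
\[
\mu_c(gA\bigtriangleup A) = \mu_c(gA) + \mu_c(A) - 2\mu_c(gA\cap A) = 2\bigl(\mu_c(A) - \mu_c(gA\cap A)\bigr),
\]
so it suffices to prove $\mu_c(gA\cap A)\to\mu_c(A)$ as $g\to e$.

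\textbf{Step 2: The base case $A\in\F_{c,0}$.} Write $A=\prod_n A_n$. If $\mu_c(A)=0$ the claim is trivial. Otherwise the infinite product $\prod_n 2\nu(A_n)$ converges non-trivially, so $\nu(A_n)>0$ for every $n$ and $\prod_{n\ge N}2\nu(A_n)\to 1$. Moreover, since $gA\cap A=\prod_n(gA_n\cap A_n)$, the partial products $\prod_n 2\nu(gA_n\cap A_n)$ are bounded above by those of $\prod_n2\nu(A_n)$ and are decreasing in the usual sense, hence $\mu_c(gA\cap A)=\prod_n 2\nu(gA_n\cap A_n)$. By the defining property of $\F_{c,0}$, this tail converges to $1$ uniformly for $g\in K$ as $N\to\infty$. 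For each individual $n$, the continuity of the $G$-action on the compact metric space $S$ forces $g\mapsto\chi_{gA_n}$ to be continuous into $L^2(S,\nu)$, so $\nu(gA_n\cap A_n)\to\nu(A_n)$ as $g\to e$. Given $\varepsilon>0$ I choose $N$ so that
\[
\prod_{n\ge N}2\nu(A_n),\ \prod_{n\ge N}2\nu(gA_n\cap A_n) \in (1-\varepsilon,1]\quad\forall g\in K,
\]
then pick a neighbourhood $V\subset K$ of $e$ so small that $\prod_{n<N}2\nu(gA_n\cap A_n)$ differs from $\prod_{n<N}2\nu(A_n)$ by at most $\varepsilon$ for $g\in V$. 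Multiplying the two pieces yields $|\mu_c(gA\cap A)-\mu_c(A)|=O(\varepsilon)$ for $g\in V$.

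\textbf{Step 3: Induction on $\F_{c,n}$ and extension to $\Rr(\F_c)$.} If the claim holds for all elements of $\F_{c,n-1}$, then for $C=B\setminus A$ with $A,B\in\F_{c,n-1}$ the elementary bound
\[
g(B\setminus A)\bigtriangleup(B\setminus A)\subset(gB\bigtriangleup B)\cup(gA\bigtriangleup A)
\]
gives $\mu_c(gC\bigtriangleup C)\to 0$. Finite disjoint unions are handled by additivity, so the claim extends to all $C\in\Rr(\F_c)$ of finite measure.

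\textbf{Step 4: Extension to $\sigma(\F_c)$.} Given $A\in\sigma(\F_c)$ with $\mu_c(A)<\infty$ and $\varepsilon>0$, the Caratheodory construction of $\mu_c$ on $\sigma(\F_c)$ produces $C\in\Rr(\F_c)$ with $\mu_c(A\bigtriangleup C)<\varepsilon$. Using $G$-invariance together with
\[
gA\bigtriangleup A\subset g(A\bigtriangleup C)\cup(gC\bigtriangleup C)\cup(C\bigtriangleup A),
\]
one obtains $\mu_c(gA\bigtriangleup A)\le 2\varepsilon+\mu_c(gC\bigtriangleup C)$, and Step 3 makes the second term tend to $0$ as $g\to e$. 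Letting $\varepsilon\to 0$ finishes the proof.

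\textbf{Main obstacle.} The delicate point is the base case (Step 2): one has to combine a uniform tail estimate (built into the definition of $\F_{c,0}$) with pointwise continuity of the finitely many head factors $g\mapsto\nu(gA_n\cap A_n)$. The latter relies on the fact that the continuous action of $G$ on the compact metric space $S$ induces a strongly continuous representation on $L^2(S,\nu)$; once this is in hand the two estimates combine multiplicatively without trouble.
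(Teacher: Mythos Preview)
Your proof is correct and follows essentially the same route as the paper's: reduce the symmetric-difference statement to $\mu_c(gA\cap A)\to\mu_c(A)$, verify this on $\F_{c,0}$ by splitting the infinite product into a uniformly controlled tail (built into the definition of $\F_{c,0}$) and a continuous finite head, then climb through $\F_c$ and $\Rr(\F_c)$ by the set-algebra inclusions for $\triangle$, and finish by Carath\'eodory approximation of a finite-measure set by an element of the ring. The only cosmetic difference is that the paper carries out Step~4 by explicitly taking a countable $\F_c$-cover and truncating it, whereas you invoke the ring-approximation $\mu_c(A\triangle C)<\varepsilon$ as a known consequence; both are the same argument.
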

\begin{proof} Denote by $\Ss$ the family of sets $A\in\sigma(\F_c)$ such that $\mu_c(A)<\infty$ and that $\lim_{g\to e}\mu_c(gA\bigtriangleup A)=0$. 
Let us prove the following assertions:\\
(i) One has $\F_c\subset\Ss$, \textit{i.e.} $\lim_{g\to e}\mu(gA\bigtriangleup A)=0$ for every $A\in\F_c$.\\
Indeed, If $\mu(A)=0$, the claim is obvious. Thus assume that $A=\prod_nA_n$ and
\[
\lim_{N\rightarrow \infty} \prod_{n=N}^{\infty} 2\nu(gA_{n} \cap A_{n})=1
\]
uniformly on $K$.
Let $(g_{m})_{m \ge 1}$ be a sequence in $K$ which converges to $e$. We prove that $\mu_c(g_mA\cap A)\to\mu_c(A)$ as $m\to\infty$; this will prove the claim since we have for every $m$
\begin{align*}
\mu_c(g_mA\bigtriangleup A)
&=
\int_X|\chi_{g_mA}-\chi_A|d\mu_c\\
&\leq 
\int_X(\chi_A-\chi_{g_mA\cap A})d\mu_c+\int_X(\chi_{g_mA}-\chi_{g_mA\cap A})d\mu_c\\
&=
2(\mu_c(A)-\mu_c(g_mA\cap A)).
\end{align*}
(Notice that we have used that $\mu_c(A)<\infty$ and that $\mu_c$ is $G$-invariant.)

As $\prod_{n=N}^{\infty} 2\nu(gA_{n} \cap A_{n})\to 1$ uniformly on $K$, there exists $N_0$ such that 
\[ 
\prod_{n=N_0+1}^{\infty}\frac{ 2\nu(g_{m}A_{n} \cap A_{n})}{2\nu(A_{n})}\in \left(\sqrt{1-\ep},\sqrt{1+\ep}\,\right) \quad \forall m.
\] 
As the representation of $G$ on $L^2(S,\nu)$ is continuous, one has, for every fixed $n$,
\[ 
\nu(g_{m}A_{n}\cap A_{n})\rightarrow\nu(A_{n})\quad \text{as }m\rightarrow \infty.
\]
Hence there exists $M$ such that, for every $n\in \left\{1,\ldots,N_{0} \right\}$,
\[ 
\frac{\nu(g_{m} A_{n} \cap A_{n} )}{\nu(A_{n})} \in \left[\left(1-\ep\right)^\frac{1}{2N_0},\left(1+\ep\right)^\frac{1}{2N_0}\right] \quad \forall m \ge M. 
\]
Then one has for every $m \ge M$ 
\begin{align*}
\frac{\prod_{n \ge 1} 2 \nu(g_{m} A_{n} \cap A_{n})}{\prod_{n\ge 1} 2 \nu(A_{n})} 
&= 
\left(\prod_{n=1}^{N_0}\frac{ 2\nu(g_{m}A_{n} \cap A_{n})}{2\nu(A_{n})}\right)\cdot \left(\prod_{n=N_0+1}^{\infty}\frac{ 2\nu(g_{m}A_{n} \cap A_{n})}{2\nu(A_{n})}\right)\\
&\in (1-\ep,1+\ep).
\end{align*}
This shows that $\frac{\mu_c(g_{m}A\cap A)}{\mu_c(A)} \rightarrow 1$ and $\mu_c(g_{m}A\cap A)\rightarrow \mu_c(A)$ as $m\to\infty$.\\
(ii) If $A,B\in\Ss$, then $A\cap B\in\Ss$ and $A\setminus B\in\Ss$. In particular, $\Ss$ is a semiring of subsets of $X$ which contains $\F_c$.\\
Indeed, let $A,B\in\Ss$ and let $g\in G$. Then
\begin{align*}
\mu_c(g(A\cap B)\bigtriangleup (A\cap B))
&=
\int_X|\chi_{gA}\chi_{gB}-\chi_A\chi_B|d\mu_c\\
&\le
\int_X|\chi_{gA}(\chi_{gB}-\chi_B)|d\mu_c
+\int_X|\chi_B(\chi_{gA}-\chi_A)|d\mu_c\\
&\le 
\mu_c(gA\bigtriangleup A)+\mu_c(gB\bigtriangleup B)\to 0
\end{align*}
as $g\to e$. This shows that $A\cap B\in\Ss$.
Next,
\begin{align*}
\mu_c(g(A\setminus B)\bigtriangleup (A\setminus B))
={}&
\int_X|\chi_{g(A\setminus B)}-\chi_{A\setminus B}|d\mu_c\\
={}&
\int_X|\chi_{gA}(1-\chi_{gB})-\chi_A(1-\chi_B)|d\mu_c\\
={}&
\int_X|\chi_{gA}-\chi_A-(\chi_{gA}\chi_{gB}-\chi_A\chi_B)|d\mu_c\\
\le{}&
\mu_c(gA\bigtriangleup A)+\int_X|\chi_{gA}-\chi_A|\chi_{gB}d\mu_c\\
&+
\int_X\chi_A|\chi_{gB}-\chi_B|d\mu_c\\
\le{}& 
2\mu_c(gA\bigtriangleup A)+\mu_c(gB\bigtriangleup B)\to 0
\end{align*}
as $g\to e$, showing that $A\setminus B\in\Ss$. Lemma \ref{intersection} and these facts imply that $\F_c\subset\Ss$. \\
(iii) Let $A_1,\ldots,A_n\in\Ss$. Then their union $\bigcup_{j=1}^nA_j\in\Ss$. In particular, $\Ss$ is a ring of subsets of $X$ which contains the ring generated by $\F_c$.\\
Indeed, by (ii), we can assume that $A_i\cap A_j=\emptyset$ for all $i\not=j$. Setting $A=\bigsqcup_{j=1}^n A_j$, we have
\begin{align*}
\mu_c(gA\bigtriangleup A)
&=
\int_X\left|\sum_{j=1}^n \chi_{gA_j}-\sum_{j=1}^n\chi_{A_j}\right|d\mu_c\\
&=
\int_X\left|\sum_{j=1}^n (\chi_{gA_j}-\chi_{A_j})\right|d\mu_c\\
&\le
\sum_{j=1}^n \mu_c(gA_j\bigtriangleup A_j)\to 0
\end{align*} 
as $g\to e$. This proves (iii).\\
(iv) Let $A\in\sigma(\F_c)$ be such that $\mu_c(A)<\infty$. Then
\[
\lim_{g\to e}\mu_c(gA\bigtriangleup A)=0.
\]
Indeed, fix $\ep>0$. By construction of the measure $\mu_c$ on $\sigma(\F_c)$, and since $\Ss$ is a ring which contains the ring $\Rr(\F_c)$ generated by $\F_c$,
\[
\mu_c(A)=\inf\left\{\sum_{i\ge 1}\mu_c(A_i): A_i\in\F_c\ \forall i,\ A\subset\bigcup_i A_i\right\}.
\] 
Hence, there exists a sequence $(A_i)_{i\ge 1}\subset \F_c$ such that $A\subset \bigcup_i A_i:=B$ and 
\begin{equation}\label{infimum}
\mu_c(A)\le \sum_i\mu_c(A_i)<\mu_c(A)+\frac{\ep}{5}.
\end{equation}
By (ii), we assume that $A_i\cap A_j=\emptyset$ for all $i\not=j$. Choose then $N$ large enough so that 
\[
\sum_{i>N}\mu_c(A_i)<\frac{\ep}{5},
\]
and let $V\subset G$ be an open neighbourhood of $e$ such that 
\[
\sum_{i=1}^N\mu_c(gA_i\bigtriangleup A_i)<\frac{\ep}{5}\quad \forall g\in V.
\]
Then we have for every $g\in V$
\begin{align*}
\mu_c(gA\bigtriangleup A)
&\le
\int_X|\chi_{gA}-\chi_{gB}|d\mu_c+\int_X|\chi_{gB}-\chi_B|d\mu_c
+\int_X|\chi_B-\chi_A|d\mu_c\\
&\le
2\mu_c(B\setminus A)+\sum_i \mu_c(gA_i\bigtriangleup A_i)\\
&\le
2\mu_c(B\setminus A)+\sum_{i=1}^N\mu_c(gA_i\bigtriangleup A_i)+2\sum_{i>N}\mu_c(A_i)<\ep.
\end{align*}
\end{proof}

\begin{remark}
Since $\mu$ is not $\sigma$-finite, there is no reason that it coincides with $\mu_c$ on $\sigma(\F_c)$, even if it does on $\F_{c}\subset \F$. Furthermore, we had to use $\mu_c$ instead of $\mu$ because of equality \eqref{infimum}; more precisely, it is not necessarily true that, for $A\in\sigma(\F_c)$ one has
\[
\mu(A)=\inf\left\{\sum_{i\ge 1}\mu(A_i): A_i\in\F_c\ \forall i,\ A\subset\bigcup_i A_i\right\}.
\] 
\end{remark}

\begin{lemma}\label{3.15}
Let $(\Omega,\B,\rho)$ be a measure space on which a group $G$ acts by measurable automorphisms, and such that $\rho(gB)=\rho(B)$ for every $B\in\B$ such that $\rho(B)<\infty$. Then the action of $G$ preserves $\rho$, \textit{i.e.} $\rho(gB)=\rho(B)$ for every $B\in\B$.
\end{lemma}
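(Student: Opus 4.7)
The plan is to handle two cases according to whether $\rho(B)$ is finite or infinite, and to use the fact that $G$ acts by \emph{automorphisms}, so that each $g\in G$ has an inverse $g^{-1}\in G$ which also acts as a measurable bijection preserving $\B$ and satisfies the same finite-measure-preserving hypothesis.

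First, fix $g\in G$ and an arbitrary $B\in\B$. If $\rho(B)<\infty$, the equality $\rho(gB)=\rho(B)$ is exactly the hypothesis. So the only nontrivial case is when $\rho(B)=\infty$, in which case we want to show $\rho(gB)=\infty$.

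The key step is a short argument by contradiction: suppose on the contrary that $\rho(gB)<\infty$. Since $g$ is an automorphism, the set $B':=gB$ lies in $\B$, and we may apply the hypothesis to $B'$ and to the group element $g^{-1}\in G$. This yields $\rho(g^{-1}B')=\rho(B')$, i.e.\ $\rho(g^{-1}gB)=\rho(gB)$. But $g^{-1}gB=B$, so we obtain $\rho(B)=\rho(gB)<\infty$, contradicting $\rho(B)=\infty$. Hence $\rho(gB)=\infty=\rho(B)$, completing the proof.

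There is essentially no obstacle here: the whole argument rests on invoking the hypothesis for the pair $(g^{-1},gB)$ rather than $(g,B)$, which is legitimate precisely because we have a group action by automorphisms rather than just an action by measurable endomorphisms. The only point to be slightly careful about is making sure one does not implicitly assume $\rho(gB)<\infty$ when invoking the hypothesis in the forward direction.
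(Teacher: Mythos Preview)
Your proof is correct and follows essentially the same approach as the paper: assume $\rho(B)=\infty$ but $\rho(gB)<\infty$, then apply the finite-measure hypothesis to the pair $(g^{-1},gB)$ to deduce $\rho(B)=\rho(g^{-1}gB)=\rho(gB)<\infty$, a contradiction. The paper's argument is phrased slightly more tersely but the idea is identical.
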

\begin{proof} If there existed $g\in G$ and $B\in\B$ such that $\rho(B)=\infty$ and $\rho(gB)\not=\rho(B)$, then necessarily, we would have $\rho(gB)<\infty$. But this contradicts our hypothesis since then $\rho(g^{-1}gB)=\rho(B)$ would be finite.
\end{proof}

\par\vspace{2mm}
The next proposition is the last step of the proof of Theorem \ref{thmC0}.

\begin{proposition}
 There exists a $G$-invariant subset $\Om\in\sigma(\F_c)$ such that the measure $\rho:\sigma(\F_c)\rightarrow [0,\infty]$, defined by $\rho(B):=\mu_c(B\cap\Om)$ for every $B\in\sigma(\F_c)$, is $\sigma$-finite and $G$-invariant. Furthermore, the Hilbert space $L^2(\Om,\rho)$ is separable and contains the sequence of unit vectors $(\xi_m)$ of Proposition \ref{resume}.
\end{proposition}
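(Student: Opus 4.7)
The plan is to construct $\Om$ as a countable $G$-saturation of the supports $X_m$, using the continuity of $\pi_X$ on $L^2(X,\sigma(\F_c),\mu_c)$ established in Proposition \ref{continuite} together with the second countability of $G$. Since $G$ is lcsc, fix once and for all a countable dense subset $D \subset G$ (with $e \in D$), and define
\[
\Om := \bigcup_{d \in D}\bigcup_{m \ge 1} d X_m \in \sigma(\F_c).
\]
As a countable union of sets of $\mu_c$-measure $1$, the measure $\rho(B) := \mu_c(B \cap \Om)$ is automatically $\sigma$-finite on $\sigma(\F_c)$; moreover each $\xi_m = \chi_{X_m}$ is supported in $\Om$ and remains a unit vector in $L^2(\Om,\rho)$.

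The key step is to verify essential $G$-invariance, \textit{i.e.} $\mu_c(h\Om \bigtriangleup \Om) = 0$ for every $h \in G$. For fixed $h \in G$, $d \in D$ and $m \ge 1$, choose a sequence $(d_n) \subset D$ with $d_n \to hd$. Continuity of $\pi_X$ (Proposition \ref{continuite}) applied to the finite-measure set $X_m$ yields
\[
\mu_c(hd\,X_m \bigtriangleup d_n X_m) = \|\pi_X(hd)\chi_{X_m} - \pi_X(d_n)\chi_{X_m}\|_2^2 \xrightarrow{n \to \infty} 0.
\]
Since $d_n X_m \subset \Om$, this forces $\mu_c(hd\,X_m \setminus \Om) = 0$. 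Summing over the countable indexing set $D \times \N$ gives $\mu_c(h\Om \setminus \Om) = 0$, and replacing $h$ by $h^{-1}$ (together with $G$-invariance of $\mu_c$) gives the symmetric inclusion.

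Granted essential invariance, $G$-invariance of $\rho$ is immediate: for any $h \in G$ and $B \in \sigma(\F_c)$,
\[
\rho(hB) = \mu_c(hB \cap \Om) = \mu_c(B \cap h^{-1}\Om) = \mu_c(B \cap \Om) = \rho(B),
\]
using $G$-invariance of $\mu_c$ and the fact that $B \cap (h^{-1}\Om \bigtriangleup \Om)$ is a $\mu_c$-null set. If the proposition is read as requiring $\Om$ itself to be strictly $G$-invariant, I would replace $\Om$ by
\[
\widetilde\Om := \{x \in X : \mu_H(\{g \in G : gx \in \Om\}) > 0\},
\]
where $\mu_H$ is a right Haar measure on $G$: strict $G$-invariance of $\widetilde\Om$ is immediate from right-invariance of $\mu_H$; its measurability in $\sigma(\F_c)$ comes from Tonelli applied to the $\B(G) \otimes \sigma(\F_c)$-measurable function $(g,x) \mapsto \chi_\Om(gx)$ (using measurability of the action, Lemma \ref{2.7}, and the $\sigma$-finiteness of $\mu_c$ after restriction to $\Om$); and $\mu_c(\widetilde\Om \bigtriangleup \Om) = 0$ follows from the essential invariance of step two, so $\rho$ is unchanged.

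For separability of $L^2(\Om,\rho)$, observe that $\sigma(\F_c) \subset \sigma(\mathcal C)$, where $\mathcal C$ is the algebra of cylinder sets on $X = \prod_n S$. Since $S$ is a standard Borel space, $\sigma(\mathcal C)$ is countably generated, so $L^2(\Om,\sigma(\mathcal C)|_\Om,\rho)$ is a separable Hilbert space, and $L^2(\Om,\rho) = L^2(\Om,\sigma(\F_c)|_\Om,\rho)$ embeds as a closed subspace. Together with Lemma \ref{3.15}, this completes the proof of all four conclusions.

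The main obstacle I anticipate is making the Fubini/Haar-measure argument to upgrade essential to strict $G$-invariance of $\Om$ rigorous while keeping $\widetilde\Om$ inside $\sigma(\F_c)$ rather than merely its $\mu_c$-completion; the non-$\sigma$-finiteness of $\mu_c$ on all of $X$ means the Fubini step has to be carefully localized to $\Om$, where the restricted measure is $\sigma$-finite.
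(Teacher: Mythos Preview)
Your construction of $\Om$ coincides with the paper's, but the route to $G$-invariance of $\rho$ is genuinely different and arguably cleaner. The paper fixes $B$ with $\rho(B)<\infty$, decomposes $B\cap\Om=\bigsqcup_i B_i$ along the translates $g_iY$, and for each $i$ approximates $gg_i$ by elements $h_n^{(i)}\in D$ to obtain $\rho(gB_i)\ge\mu_c(B_i)$ via Proposition~\ref{continuite}; summing gives $\rho(gB)\ge\rho(B)$, and then Lemma~\ref{3.15} closes the argument. You instead prove once that $\mu_c(h\Om\bigtriangleup\Om)=0$ for every $h\in G$, from which invariance of $\rho$ on \emph{all} of $\sigma(\F_c)$ follows in one line, with no decomposition and no appeal to Lemma~\ref{3.15} (so your final reference to it is superfluous). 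Note also that the paper's $\Om$ is itself only $D$-invariant as a set, not literally $G$-invariant, so your attention to the essential-versus-strict distinction and the Haar-measure upgrade $\widetilde\Om$ is more scrupulous than the paper; for Theorem~\ref{thmC0} only invariance of $\rho$ is actually used.

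One small gap: in your separability argument you embed $L^2(\Om,\sigma(\F_c)|_\Om,\rho)$ into $L^2(\Om,\sigma(\mathcal C)|_\Om,\rho)$, but $\rho$ is only defined on $\sigma(\F_c)$, not on $\sigma(\mathcal C)$, so the larger $L^2$ space does not exist as written. The fix is local: on each $dX_m$ the measure $\mu_c$ coincides with the probability $\Pp_{dX_m}$ of Definition~\ref{2.4}, which \emph{is} defined on $\sigma(\mathcal C)$; since $\sigma(\mathcal C)$ is countably generated, each $L^2(dX_m,\sigma(\F_c),\mu_c)$ embeds in the separable space $L^2(X,\sigma(\mathcal C),\Pp_{dX_m})$, and $L^2(\Om,\rho)$ is then a countable direct sum of separable pieces.
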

\begin{proof}
Let $D=\{g_1=e,g_2,\ldots\}\subset G$ be a countable, dense subset of $G$. Set
$Y=\bigcup_{m\geq 1}X_m$ where $X_m=\prod_{k\ge 1}A_{n(k,m)}$ as in Proposition \ref{resume}, and set
\[
\Om=\bigcup_{h\in D}hY.
\]
Recall that the  measure $\rho$ on $(X,\sigma(\F_c))$ is defined by
\[
\rho(B)=\mu_c(B\cap\Om)
\]
for every $B\in\sigma(\F_c)$, so that $\rho$ is $\sigma$-finite. It remains to prove that it is $G$-invariant. By Lemma \ref{3.15}, it suffices to prove that $\rho(gB)=\rho(B)$ for every $g\in G$ and $B\in\sigma(\F_c)$ such that $\rho(B)<\infty$. 
Then, for every $i\ge 1$, set
\[
B_i=(B\cap g_iY)\setminus\left(\bigcup_{j=1}^{i-1}g_jY\right),
\]
so that $B\cap\Om=\bigsqcup_i B_i$, and thus $\rho(B)=\sum_i\rho(B_i)<\infty$.

Then
\[
\rho(gB)\ge \rho(g(B\cap\Om))=\sum_i \rho(gB_i).
\]
For fixed $i$, let $(h_n^{(i)})_{n\ge 1}\subset D$ be such that $h_n^{(i)}\to gg_i$ as $n\to\infty$. Then $\rho(gB_i)\ge \rho(gB_i\cap h_n^{(i)}g_i^{-1}B_i)$ for every $n$, and 
\[
\rho(gB_i\cap h_n^{(i)}g_i^{-1}B_i)=\mu_c(gB_i\cap h_n^{(i)}g_i^{-1}B_i)\to \mu_c(gB_i)=\mu_c(B_i)
\]
as $n\to\infty$, thus $\rho(gB_i)\ge\mu_c(B_i)$, hence 
\[
\rho(gB)\ge \sum_i\rho(gB_i)\ge \sum_i\mu_c(B_i)=\mu_c(B)\ge \mu_c(B\cap\Om)=\rho(B).
\]
We also get
\[
\rho(B)\le \rho(gB)\le \rho(g^{-1}gB)=\rho(B).
\]
Finally, separability of $L^2(\Om,\rho)$ follows from the countability of $D$ and from its density in $G$.

The proof is now complete.
\end{proof}

\par\vspace{3mm}

\bibliographystyle{plain}
\bibliography{refFixed}

\begin{thebibliography}{1}

\bibitem{AEG}
S.~Adams, G.~A. Elliot, and T.~Giordano.
\newblock Amenable actions of groups.
\newblock {\em Trans. AMS}, 344:803--822, 1994.

\bibitem{BHV}
B.~Bekka, P.~de~la Harpe, and A.~Valette.
\newblock {\em Kazhdan's {P}roperty $(T)$}.
\newblock Cambridge University Press, 2008.

\bibitem{Bil}
P.~Billingsley.
\newblock {\em Probability and {M}easure}.
\newblock John Wiley \&\ and Sons, New York Chichester Brisbane Toronto, 1979.

\bibitem{chatterjietall}
I.~Chatterji, C.~Drutu, and F.~Haglund.
\newblock Kazhdan and {H}aagerup properties from the median viewpoint.
\newblock {\em Advances in Maths.}, 225:882--921, 2010.

\bibitem{ccjjv}
P.-A. Cherix, M.~Cowling, P.~Jolissaint, P.~Julg, and A.~Valette.
\newblock {\em Groups with the {H}aagerup property ({G}romov's
  a-{T}-menability)}.
\newblock Birkh\"auser, {B}asel, 2001.

\end{thebibliography}

\end{document}